\def\NAT@def@citea{\def\@citea{\NAT@separator}}
\theoremstyle{plain}
\newtheorem{theorem}{Theorem}[section]
\newtheorem{corollary}[theorem]{Corollary}
\newtheorem{proposition}[theorem]{Proposition}
\theoremstyle{definition}
\newtheorem{definition}[theorem]{Definition}
\newtheorem{example}[theorem]{Example}
\theoremstyle{remark}
\def\diag{\mathop{\rm diag}}
\def\Diag{\mathop{\rm Diag}}
\def\inertia{\mathop{\rm Inertia}}
\def\rank{\mathop{\rm rank}}
\def\span{\mathop{\rm span}}
\def\inertia{\mathop{\rm In}}
\newcommand{\rr}{\mathbb{R}}
\def\1{\mathbf{1}}
\newcommand{\lan}{\langle}
\newcommand{\ran}{\rangle}
\def\range{\mathop{\rm col}}
\def\EDM{\mathop{\rm EDM}}
\def\GDM{\mathop{\rm GEDM}}
\def\nul{\mathop{\rm null}}
\def\a{\alpha}
\def\b{\beta}
\def\min{{\rm min}}
\def\tr{{\rm trace}}
\def\L{\wideitilde{L}}
\def\S{\widetilde{S}}
\def\L{\widetilde{L}}
\begin{document}


\title{Generalized Euclidean distance matrices}

\author{
\name{R. Balaji \textsuperscript{a}
, R.B. Bapat\textsuperscript{b} and Shivani Goel\textsuperscript{c}\thanks{CONTACT: Shivani Goel. Email: shivani.goel.maths@gmail.com}}
\affil{\textsuperscript{a}Department of Mathematics, IIT Madras, Chennai, India; \textsuperscript{b}Theoretical Statistics and Mathematics Unit, Indian Statistical Institute, Delhi, India;\textsuperscript{c}Department of Mathematics, IISc Bangalore, Bangalore,India }
}

\maketitle

\begin{abstract}
Euclidean distance matrices ($\EDM$) are symmetric nonnegative matrices with several interesting properties. 
In this article, we introduce a wider class of matrices called generalized Euclidean distance matrices ($\GDM$s) that include $\EDM$s. Each $\GDM$ is an entry-wise nonnegative matrix. A $\GDM$ is not symmetric unless it is an $\EDM$. By some new techniques, we show that
many significant results on Euclidean distance matrices can be extended to generalized Euclidean distance matrices.
These contain results about eigenvalues, inverse, determinant, spectral radius, Moore-Penrose inverse and some majorization inequalities. 
We finally give an application by constructing infinitely divisible matrices using generalized Euclidean distance matrices. 
\end{abstract}

\begin{keywords}
Euclidean distance matrix,  Haynsworth inertia formula,  Laplacian matrices, Majorization, Infinitely divisible matrices.
\end{keywords}

\begin{amscode}
15A57
\end{amscode}

\section{Introduction}
An $n \times n$ real matrix $D=[d_{ij}]$  is a {\em Euclidean distance matrix} (EDM) if there exist vectors
$x^1,x^2,\dotsc,x^n$ in a Euclidean space $(V,\lan \cdot,\cdot \ran)$ such that 
\begin{equation} \label{defn}
d_{ij}= \lan x^i - x^j, x^i-x^j \ran~~~\mbox{for all}~ i,j=1,2,\dotsc,n ; 
\end{equation}
or equivalently,
\[d_{ij}= \|x^i-x^j\|^{2}~~\mbox{for all}~ i,j=1,2,\dotsc,n. \]
Euclidean distance matrices appear in several fields. For instance, in approximation theory, Micchelli \cite{mic} proved  the striking
result: If $x^1,\dotsc,x^n$ are $n$ distinct points in the plane, then 
\[(-1)^{n-1} \det [\sqrt{1+ \|x^i-x^j\|^2}]  >0.\]
As a consequence, there exists a unique function $$f(x)=\sum_{k=1}^{n} c_{k} \sqrt{1+ \|x-x^k\|^{2}}$$
interpolating a given data $y_{1},\dotsc,y_n$ at $x^1,\dotsc,x^n$.
Euclidean distance matrices have several interesting properties. For example, we have the following basic result. 
\begin{theorem}[Menger, Schoenberg.  \cite{Fiedler}] \label{fie}
Let $D=[d_{ij}]$ be an $n \times n$ symmetric matrix with zero diagonal. Then the following are equivalent.
\begin{enumerate}
\item[{\rm 1.}] $D$ is an $\EDM$.
\item[{\rm 2.}] If $(x_1,\dotsc,x_n)'$ is such that $\sum_{i=1}^n x_{i}=0$, then $\sum_{i,j}d_{ij}x_{i}x_{j} \leq 0$. 
\item[{\rm 3.}] Let $\1=(1,1,\dotsc,1)' \in \rr^n$. Then the bordered matrix
$\left[
\begin{matrix}
D & \1 \\
\1' & 0
\end{matrix}
\right]$ has exactly one (simple) positive eigenvalue.
\end{enumerate}  
\end{theorem}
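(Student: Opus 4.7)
The plan is to establish the cycle (1) $\Rightarrow$ (2) $\Rightarrow$ (1) for the first equivalence, and then to prove (2) $\Leftrightarrow$ (3) separately by an inertia computation on the bordered matrix.

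For (1) $\Rightarrow$ (2), I would substitute $d_{ij} = \|x^i\|^2 + \|x^j\|^2 - 2\langle x^i, x^j\rangle$ into $\sum_{i,j} d_{ij} x_i x_j$; under $\sum_i x_i = 0$ the first two sums vanish and what remains is $-2\|\sum_i x_i x^i\|^2 \leq 0$.

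For (2) $\Rightarrow$ (1), the key device is double centering. Set $J := I - \frac{1}{n}\1\1'$ and $B := -\frac{1}{2}JDJ$. Since $\1'(Jy) = 0$ for every $y \in \rr^n$, condition (2) applied to $Jy$ yields $y'By = -\frac{1}{2}(Jy)'D(Jy) \geq 0$, so $B$ is positive semidefinite. Factor $B = X'X$ and let $x^1,\dotsc,x^n$ be the columns of $X$. The remaining step is the algebraic identity $b_{ii}+b_{jj}-2b_{ij}=d_{ij}$, which follows from $d_{ii}=d_{jj}=0$ together with $J\1 = 0$ after expanding $JDJ$; this gives $\|x^i-x^j\|^2 = d_{ij}$, as required.

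For (2) $\Leftrightarrow$ (3), I would decompose $(x,\lambda) \in \rr^{n+1}$ by writing $x = \bar{x}\1 + z$ with $\1'z = 0$ and $\bar{x} = \frac{1}{n}\1'x$. The quadratic form associated with the bordered matrix then reads $x'Dx + 2\lambda\1'x = z'Dz + a\bar{x}^2 + 2\bar{x}(\1'Dz) + 2n\lambda\bar{x}$, where $a := \1'D\1$. The invertible substitution $\mu := \1'Dz + n\lambda$ (which merely shifts $\lambda$ by a linear functional of $z$) decouples $(\bar{x},\mu)$ from $z$, leaving the residual $2\times 2$ form $a\bar{x}^2 + 2\bar{x}\mu$ whose associated matrix has determinant $-1$ and hence inertia $(1,1,0)$ for every value of $a$. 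Sylvester's law of inertia then gives $\inertia(M) = \inertia(D|_{\1^\perp}) + (1,1,0)$, so the bordered matrix has exactly one (necessarily simple) positive eigenvalue if and only if $D|_{\1^\perp}$ is negative semidefinite, i.e., condition (2) holds.

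The main obstacle is the inertia bookkeeping in (2) $\Leftrightarrow$ (3): one must verify that the composite change of variables $(x,\lambda)\mapsto(\bar{x},z,\mu)$ is a genuine invertible linear map on $\rr^{n+1}$ so that Sylvester's law applies, and that the $2\times 2$ border contribution is $(1,1,0)$ uniformly in $a$ (including the degenerate case $a=0$, which is immediate from the nonzero determinant). Everything else reduces to the double-centering identity and routine algebra.
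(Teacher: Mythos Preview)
The paper does not actually prove Theorem~\ref{fie}; it is stated in the introduction as the classical Menger--Schoenberg characterization and attributed to \cite{Fiedler}. So there is no ``paper's own proof'' to compare against here.

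That said, your proposal is correct and is the standard argument. Two small remarks. First, your notation clashes with the paper's: the paper reserves $J$ for $\1\1'$ and writes $P=I-\frac{1}{n}J$ for the centering projection you call $J$; if this is to be spliced into the manuscript you should switch to $P$. Second, in the paper's inertia convention $\inertia=(\nu,\delta,\mu)=(\text{negative},\text{nullity},\text{positive})$, the $2\times 2$ border block $\begin{pmatrix} a & 1\\ 1 & 0\end{pmatrix}$ has inertia $(1,0,1)$, not $(1,1,0)$; your conclusion that it contributes exactly one positive and one negative eigenvalue is of course the right one, so this is only a labeling slip. The inertia decomposition $\inertia(M)=\inertia(D|_{\1^\perp})+(1,0,1)$ then gives exactly the equivalence (2) $\Leftrightarrow$ (3) as you describe.
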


Symmetric matrices with exactly one simple positive eigenvalue are called elliptic matrices. Elliptic matrices play an important role in Alexandrov inequalities for mixed volumes \cite[chapter 5]{Bapat}.
Elliptic matrices are useful in obtaining infinitely divisible matrices. If $[a_{ij}]$ is an elliptic matrix with all entries positive, then $[\displaystyle \frac{1}{a_{ij}^{ r}}]$ is positive semidefinite for all $r>0$, i.e.  $[\frac{1}{a_{ij}}]$ is a infinitely divisible matrix. The main theme of \cite{Bhatia} is to obtain infinitely divisible matrices via elliptic matrices.

 \subsection{Objective of the paper}
In this paper,  we investigate the so-called generalized Euclidean distance matrices ($\GDM$).
All Euclidean distance matrices ($\EDM$) are generalized Euclidean distance matrices.
 If a generalized Euclidean distance matrix is not an $\EDM$, then it is not symmetric.
Despite this fact, we extend many properties of $\EDM$s to  $\GDM$s.
For example, we show that all eigenvalues of a $\GDM$ are real and has at most one positive eigenvalue, null space of a $\GDM$ 
is a subspace of $\1^\perp$, the Moore-Penrose inverse of a $\GDM$  is negative semidefinite on $\1^\perp$ and so on. 
These results are obtained by using new techniques that circumvent the standard arguments on symmetric matrices.

\subsection{Definition of generalized Euclidean distance matrices}
We begin with the following observation.  Let $D=[d_{ij}]$ be an $n \times n$ Euclidean distance matrix. 
In view of (\ref{defn}), if $D=[d_{ij}]$ is a EDM, then 
\[d_{ij}=\|x^i\|^{2} + \|x^j\|^{2} -2 \lan x^i, x^j \ran ~~~\forall i,j=1,2,\dotsc,n. \]
Recall that the Gram matrix of an ordered system of vectors $\{v^1,\dotsc,v^n\}$ in a Euclidean space is the matrix
$[\lan v^i,v^j \ran]$.
As every positive semidefinite matrix is a Gram matrix of some system of vectors in some Euclidean space, it follows that
$D=[d_{ij}]$ is an EDM if and only if there exists an $n \times n$ positive semidefinite matrix $F=[f_{ij}]$ such that
\begin{equation} \label{df11}
d_{ij}=f_{ii}+f_{jj}-2f_{ij}.
\end{equation}
Let $\1$ be the vector of all ones in $\rr^n$. Define $J:=\1 \1'$.  
Then (\ref{df11}) can be rewritten as
\begin{equation} \label{df}
D=\diag(F)J + J \diag(F) -2 F.
\end{equation}
Define 
\begin{equation} \label{df1}
P:=I-\frac{1}{n} J~~\mbox{and}~~ Y=[y_{ij}]:=-\frac{1}{2}PDP,
\end{equation}
where $I$ is the $n \times n$ identity matrix.
As the diagonal entries of $D$ are zero, 
from equation $(\ref{df11})$, it can be verified that 
\[d_{ij}=y_{ii} + y_{jj}-2y_{ij}, \]
or equivalently,
\[D= \diag(Y)J+J \diag(Y)-2Y .\] 
By an easy verification, it follows that $Y$ is positive semidefinite and $Y \1=0$. Thus, a symmetric matrix $D=[d_{ij}]$ is an EDM if and only if 
there exists a positive semidefinite matrix $G=[g_{ij}]$ such that $G \1=0$ (equivalently, row sums and column sums of $G$ are zero) and 
\[d_{ij}=g_{ii} + g_{jj} -2g_{ij}. \] 
Several important properties of Euclidean distance matrices depend on this characterization. 

\begin{definition} \rm \label{lap}
We say that an $n \times n$ real symmetric matrix $L$ is a {\em generalized Laplacian matrix} if $L$ is positive semidefinite
   and $L \1=0$. 
   \end{definition}
   It is easy to note that $L$ is a generalized Laplacian if and only if there exists a positive semidefinite matrix $F$ such that
$L=PFP$, where 
$P$ is the matrix in $(\ref{df1})$.
Laplacian matrices of connected graphs are examples of generalized Laplacian matrices.

\begin{definition}  \rm \label{p}
Let $a$ and $b$ be any two positive numbers and $L$ be an $n \times n$  generalized Laplacian matrix. 
Define
\[d_{ij}=a^2 l_{ii} + b^{2} l_{jj} -2 ab l_{ij}~~~ i,j=1,2,\dotsc,n.\]
We now say that $[d_{ij}]$ is a generalized Euclidean distance matrix ($\GDM$). 
\end{definition}
An easy computation shows that 
\[d_{ji}=b^2 l_{ii} + a^2 l_{jj} -2ab l_{ij}. \]
Thus, $D$ is symmetric if and only if $a=b$ and in this case,
all the diagonal entries of $D$ are zero; hence $D$ will be a Euclidean distance matrix. 
To illustrate the definition, consider the following example.
\begin{example} \label{gedmexample}\rm
Let 
$$L:= [l_{ij}]=
\left[
\begin{array}{cccc }
~~3 & -1 & -2 \\
-1 & ~~3 & -2 \\
-2 & -2 & ~~4 \\
\end{array}
\right].$$ 
Then $L$ is a generalized Laplacian matrix.  Set $a=1$ and $b=3$. For $i,j=1,2,3$,  define 
\begin{equation*}
\begin{aligned}
d_{ij}:&=a^2l_{ii}+b^2 l_{jj} -2abl_{ij}\\
&= l_{ii}+9 l_{jj} -6l_{ij}.
\end{aligned}
\end{equation*}
   Now,
\[D:=[d_{ij}] = \left[
\begin{array}{cccc}
12 & 36 & 51 \\
36 & 12 & 51 \\
43 & 43 & 16
\end{array}
\right]\]
 is a $\GDM$.
\end{example}

We note the following proposition.
\begin{proposition} \label{prop}
 An $n \times n$ matrix $D$ is a $\GDM$ if and only if there exist $x^1,\dotsc,x^n$ in some Euclidean space $(V,\lan \cdot, \cdot \ran)$ such that 
\[d_{ij}=\langle ax^i-bx^j,ax^i-bx^j \rangle ~~\mbox{and}~~\sum_{j=1}^nx^{j}=0.\]
\end{proposition}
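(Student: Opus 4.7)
The plan is to prove both directions by using the Gram matrix characterization of positive semidefinite matrices together with the generalized Laplacian property $L\mathbf{1}=0$.

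For the ``if'' direction, I would assume vectors $x^1,\dotsc,x^n$ exist satisfying the two conditions. I would expand $\langle ax^i-bx^j, ax^i-bx^j\rangle=a^2\langle x^i,x^i\rangle+b^2\langle x^j,x^j\rangle-2ab\langle x^i,x^j\rangle$ and set $l_{ij}:=\langle x^i,x^j\rangle$. The matrix $L=[l_{ij}]$ is positive semidefinite because it is a Gram matrix, and $L\mathbf{1}=0$ follows directly from $\sum_j x^j=0$ since $\sum_j l_{ij}=\langle x^i,\sum_j x^j\rangle=0$. Thus $L$ is a generalized Laplacian in the sense of Definition \ref{lap}, and the resulting $d_{ij}=a^2 l_{ii}+b^2 l_{jj}-2ab\,l_{ij}$ matches Definition \ref{p}, so $D$ is a $\GDM$.

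For the ``only if'' direction, assume $D$ is a $\GDM$, so there exists a generalized Laplacian $L=[l_{ij}]$ realizing the formula in Definition \ref{p}. Since $L$ is positive semidefinite, I would invoke the standard fact that any positive semidefinite matrix is a Gram matrix: pick vectors $x^1,\dotsc,x^n$ in a Euclidean space $V$ (we may take $V$ to be their span) with $l_{ij}=\langle x^i,x^j\rangle$. Reversing the expansion above immediately gives $d_{ij}=\langle ax^i-bx^j,ax^i-bx^j\rangle$.

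The step that requires a small trick is verifying $\sum_j x^j=0$. Setting $w:=\sum_{j=1}^n x^j\in V$, the condition $L\mathbf{1}=0$ reads $\langle x^i,w\rangle=0$ for every $i$. Summing over $i$ yields $\langle w,w\rangle=0$, hence $w=0$. This is the main (and only) non-routine observation in the proof, since a priori one only knows orthogonality of $w$ to the spanning set $\{x^i\}$, but summing converts that into orthogonality of $w$ to itself. With this, both implications are established and the proposition follows.
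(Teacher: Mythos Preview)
Your proof is correct and follows essentially the same route as the paper's: both directions hinge on the Gram-matrix characterization of positive semidefinite matrices, and the expansion $\langle ax^i-bx^j,ax^i-bx^j\rangle=a^2 l_{ii}+b^2 l_{jj}-2ab\,l_{ij}$. The only difference is that the paper simply asserts that $L\mathbf 1=0$ together with positive semidefiniteness yields a Gram representation with $\sum_j x^j=0$, whereas you spell this out via the observation $\langle w,w\rangle=\sum_i\langle x^i,w\rangle=0$; this makes your argument slightly more self-contained but is not a genuinely different approach.
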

\begin{proof}
Let $D=[d_{ij}]$ be a $\GDM$. Then, 
\[d_{ij}=a^2 l_{ii} + b^2 l_{jj} -2ab l_{ij},  \]
where $L=[l_{ij}]$ is a generalized Laplacian matrix and $a,b$ are some positive numbers. Because $L$ is positive semidefinite and $L \1=0$, there exist $x^1,\dotsc,x^n$  in a Euclidean space $(V,\lan \cdot, \cdot \ran)$ such that $$l_{ij}=\langle x^i,x^j \rangle~~\mbox{and}~~\sum_{j=1}^{n}x^j=0. $$
Thus,
\begin{equation*} 
\begin{aligned}
d_{ij}= a^2 \langle x^i,x^i \rangle + b^2 \langle x^j, x^j \rangle -2ab \langle x^i, x^j \rangle =\langle ax^i-bx^j,ax^i-bx^j \rangle.
\end{aligned}
\end{equation*}
The converse is immediate.
\end{proof}

\subsection{Preliminaries}
We fix the notation and mention a few results/definitions that are needed in the sequel.
\renewcommand{\labelenumi}{(N\arabic{enumi})}
\begin{enumerate}
\item \label{(N1)} All matrices considered are real. The transpose of a matrix $A$ is written $A'$. The notation $\1$ will denote the vector of all ones in $\rr^n$, $J$ will be the matrix $\1 \1'$ 
and $\1^{\perp}$ will be the subspace containing all vectors that are orthogonal to $\1$. As usual $e_1,\dotsc,e_n$ will denote the standard orthonormal basis vectors in $\rr^n$; hence the first column of an $n \times n$ matrix $A$ will be $Ae_1$ and so on. We use $P$ to denote the orthogonal projection $I-\frac{1}{n}J$ onto $\1^\perp$.

\item \label{(N2)} The null space of $A$ is denoted by $\nul(A)$ and the column space (range) of $A$ by $\range(A)$.
As usual, $\rho(A)$ will be the spectral radius of $A$ and the Moore-Penorse inverse of $A$ will be written 
$A^\dag$. 

\item \label{(N3)} Let $x:=(x_{1},\dotsc,x_n)'$ and $y:=(y_1,\dotsc,y_n)'$ be any two vectors. Let
$\sigma$ and $\pi$ be permutations on $\{1,\dotsc,n\}$ such that 
\[x_{\sigma(1)} \geq \cdots  \geq x_{\sigma_n}~~\mbox{and}~~y_{\pi_1}\geq \cdots \geq y_{\pi_n}. \]
We say that $x$ is majorized by $y$ if 
$$\sum_{i=1}^{n} x_i= \sum_{i=1}^n y_i~~~\mbox{and}~~\sum_{j=1}^k x_{\sigma(j)} \leq \sum_{j=1}^k y_{\pi(j)}~~~(k=1,\dotsc,n-1).$$
To say that $x$ is majorized by $y$, we use the notation $x \prec y$.

\item \label{(N4)} Given an $n \times n$ matrix $A=[a_{ij}]$, we use $\diag(A)$ to denote the
vector $(a_{11},\dotsc,a_{nn})'$ in $\rr^n$. If $(p_1,\dotsc,p_n)'$ is a vector in $\rr^n$, we use
$\Diag(p_1,\dotsc,p_n)$ to denote the diagonal matrix with diagonal entries $p_1,\dotsc,p_n$.

\item \label{(N5)} Let $H$ be an $n \times n$  
symmetric matrix and the eigenvalues are $\lambda_1\dotsc,\lambda_n$ such that
$$\lambda_1 \geq \cdots \geq \lambda_n.$$ We now define $\lambda(H)$ by $$\lambda(H)=(\lambda_1,\dotsc,\lambda_n)'.$$

\item \label{(N6)} By a theorem of Schur, if $H$ is an $n \times n$ symmetric matrix, then
\begin{equation*} 
\diag(H) \prec \lambda(H).
\end{equation*} If $A$ and $B$ are $n \times n$ symmetric matrices, then a result of Ky Fan states that
$$\lambda(A+B) \prec \lambda(A)+\lambda(B).$$ (see e.g., \cite[Theorem 7.14 and 7.15]{Zhang})

\item \label{(N61)} The inertia of a symmetric matrix $A$ is denoted by $\inertia(A)=(\nu,\delta,\mu)$ where $\nu$ is the number of 
negative eigenvalues of $A$, $\delta$ is the nullity of $A$ and $\mu$ is the number of positive eigenvalues of $A$. 
If $A$ is $n \times n$, $p \in \rr^n$, and if 
\[\widetilde{A}=\left[
\begin{array}{ccc}
A & p \\
p' & 0
\end{array}
\right],
\]
then the generalized Schur complement $\widetilde{A}/A$ is $-p'A^{\dag}p$. 
If $p \in \range(A)$, then 
\[\inertia(\widetilde{A}/A) + \inertia(A)=\inertia(\widetilde{A}). \]
(see \cite[Theorem 2]{Haynsworth})

\item \label{(N7)} An $n \times n$ matrix $A$ is called an ${\bf M}$-matrix, if $A= 
\rho I-S$,
where $S$ is a nonnegative matrix and $\rho \geq \rho(S)$.

\item \label{(N8)} Let $A$ be an $n \times n$ symmetric matrix and $p \in \rr^n$. If
the bordered matrix
\[
\left[
\begin{array}{ccccccc}
A & p \\
p' & 0
\end{array}
\right]
\] has exactly one positive eigenvalue, then 
\[y \in p^\perp \implies y'Ay \leq 0. \] (see \cite[Theorem 2.9]{Ferland})
\end{enumerate}

\section{Results}
 In the sequel, we assume that $D$ is a non-zero $\GDM$ and 
$L$ is the generalized Laplacian such that
\begin{equation} \label{D}
D=a^2 \L J +b^2 J \L -2ab L, 
\end{equation}
where $a$ and $b$ are fixed positive numbers and $\L:=\Diag(L)$.
\subsection{Eigenvalues of a $\GDM$}
We shall now show that all eigenvalues of $D$ are real and in fact $D$ is similar to a symmetric matrix. 
\begin{theorem} \label{ww}
 $D$ is similar to a symmetric matrix and has exactly one positive eigenvalue.
\end{theorem}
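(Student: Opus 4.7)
The plan is to find an explicit diagonal similarity converting $D$ into a symmetric matrix, and then to read off the eigenvalue count using Cauchy interlacing.

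First, let $d := \diag(L) \in \rr^n$ denote the diagonal of $L$ viewed as a vector. Then $\L J = d\1'$ and $J\L = \1 d'$, so (\ref{D}) rewrites as
\[
D = a^2 d\1' + b^2 \1 d' - 2ab L.
\]
All of the non-symmetry of $D$ lives in the two rank-one pieces $d\1'$ and $\1 d'$, and both are controlled by $\1$. I would therefore choose an orthogonal matrix $Q = [\,Q_1 \mid \frac{1}{\sqrt{n}}\1\,]$ whose first $n-1$ columns form an orthonormal basis of $\1^\perp$. Using $Q_1'\1 = 0$ and $L\1 = 0$, a direct computation of $Q'DQ$ in block form gives leading $(n-1)\times(n-1)$ block $-2ab\,Q_1'LQ_1$, off-diagonal blocks $\sqrt{n}\,a^2 u$ and $\sqrt{n}\,b^2 u'$ with $u := Q_1'd$, and corner entry $(a^2+b^2)\tr(L)$.

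The decisive step is the further diagonal similarity $T = \Diag(1,\ldots,1,a/b)$. Conjugating $Q'DQ$ by $T$ rescales the two off-diagonal blocks to the common value $\sqrt{n}\,ab\,u$ and its transpose, so
\[
H := T Q' D Q T^{-1}
\]
is symmetric and similar to $D$. This establishes the first half of the theorem.

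For the eigenvalue count, I would invoke Cauchy interlacing. The leading $(n-1)\times(n-1)$ principal submatrix of $H$ is $-2ab\,Q_1'LQ_1$, which is negative semidefinite since $L$ is positive semidefinite; interlacing therefore forces $\lambda_2(H) \leq 0$, so $H$ has at most one positive eigenvalue. On the other hand, $H_{nn} = (a^2+b^2)\tr(L) > 0$ because $D \neq 0$ forces $L \neq 0$ and any nonzero positive semidefinite matrix has strictly positive trace. Rayleigh's principle at $e_n$ then yields $\lambda_1(H) \geq H_{nn} > 0$, so $H$, and hence $D$, has exactly one positive eigenvalue.

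I do not anticipate a serious obstacle. The only point requiring a small amount of insight is the choice of rescaling factor $a/b$ in $T$, and this is essentially forced once one has written down the two off-diagonal blocks $\sqrt{n}\,a^2 u$ and $\sqrt{n}\,b^2 u'$.
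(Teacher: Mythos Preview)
Your proof is correct and follows essentially the same approach as the paper: an orthogonal change of basis isolating $\1$ from $\1^\perp$, followed by a diagonal rescaling by $a/b$ to symmetrize, and then Cauchy interlacing for the eigenvalue count. Your version is slightly more streamlined---you do not bother to diagonalize $L$ (the paper takes $U$ with $U'LU$ diagonal, which is unnecessary for the argument), and you avoid the paper's separate handling of the EDM case by arguing directly that $D\neq 0\Rightarrow L\neq 0\Rightarrow \tr(L)>0$.
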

\begin{proof}
 If all the diagonal entries of $D$ are zero, then $D$ is a Euclidean distance matrix and in this case, the result is known.
Now assume that $D$ has at least one positive diagonal entry. 
Let the eigenvalues of $L$ be arranged
$$\a_1 \leq \a_2 \leq \cdots \leq \a_n.$$
Since $L$ is positive semidefinite and $L \1=0$, $\a_1=0$. 
Let $U$ be an orthogonal matrix with first column equal to the unit vector $\frac{1}{\sqrt{n}} \1$ and such that 
\[U'LU=\Diag(0,\a_2,\dotsc,\a_n). \]  
Define
\[(s_1,s_2,\dotsc,s_n)' := U'\L JU e_1.\] By an easy verification, \[s_1=\tr(L).\]
Furthermore,  
\[U' \L JU e_i=0 ~~~~i=2,\dotsc,n.\]
Thus, 
\begin{equation} \label{pdp}
\begin{aligned}
U'DU&=a^2 U' \L J U 
+b^2 U' J \L U -2ab U'LU \\
&=\left[
\begin{array}{ccccc}
(a^{2} + b^2)s_{1} & b^2 s_{2} & \hdots & b^2 s_{n} \\
a^{2} s_{2} & -2ab \a_{2} & \hdots & 0\\
\vdots & \vdots & \ddots & \vdots \\
a^2 s_{n} & 0 & \hdots & -2ab \a_{n}
\end{array}
\right].
\end{aligned}
\end{equation}
Define $$W:=\mbox{Diag}(\frac{\displaystyle b}{\displaystyle a},1,\dotsc,1).$$
Now, 
\begin{equation} \label{m4}
W^{-1}U'DUW=
\begin{array}{ccccc}
\left[
\begin{array}{ccccc}
(a^{2} + b^2)s_{1} & ab s_{2} & \hdots & ab s_{n} \\
abs_{2}  & -2ab \a_{2} & \hdots & 0\\
\vdots & \vdots & \ddots & \vdots \\
ab s_{n} & 0 & \hdots & -2ab \a_{n}
\end{array}
\right];
\end{array}
\end{equation}
hence $D$ is similar to a symmetric matrix. 
By interlacing theorem, $W^{-1}U'DUW$ has $n-1$ non-positive eigenvalues. Since  $\tr(D)>0$, we see that 
\[(W^{-1}U'DUW)_{11}>0.\] 
Hence, $W^{-1}U'DUW$ has at least one positive eigenvalue. So, $D$ has exactly one positive eigenvalue. The proof is now complete.
\end{proof}
In the rest of the paper,  we shall use $W$ and $U$ to denote the matrices defined in Theorem $\ref{ww}$. 
We have the following corollary now.
\begin{corollary} \label{bordered}
All the eigenvalues of the bordered matrix \[ \widetilde{D}:=
\left[
\begin{array}{cccc}
D & \1 \\
\1' & 0
\end{array}
\right]
\] are real and $\widetilde{D}$ has exactly one positive eigenvalue.
\end{corollary}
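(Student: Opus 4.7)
The plan is to extend the symmetrizing similarity from the proof of Theorem~\ref{ww} to the bordered matrix $\widetilde{D}$, and then to use the resulting arrowhead structure to pin down the inertia.

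First I would introduce the block-diagonal similarity
$$\widetilde{V} := \begin{pmatrix} UW & 0 \\ 0 & b/a \end{pmatrix},$$
where the scalar $b/a$ in the corner is the unique value that makes the two off-diagonal borders of $\widetilde{V}^{-1}\widetilde{D}\widetilde{V}$ coincide. Using $U'\1 = \sqrt{n}\,e_1$ and $W^{-1}e_1 = (a/b)\,e_1$, a direct computation gives
$$\widetilde{V}^{-1}\widetilde{D}\widetilde{V} = \begin{pmatrix} M & \sqrt{n}\,e_1 \\ \sqrt{n}\,e_1' & 0 \end{pmatrix} =: \widetilde{M},$$
where $M = W^{-1}U'DUW$ is the symmetric matrix displayed in~(\ref{m4}). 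Hence $\widetilde{D}$ is similar to the symmetric matrix $\widetilde{M}$, and in particular all its eigenvalues are real.

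Next I would count the positive eigenvalues of $\widetilde{M}$ (equivalently, of $\widetilde{D}$). Since $M$ is an $n \times n$ principal submatrix of $\widetilde{M}$ and has exactly one positive eigenvalue by Theorem~\ref{ww}, Cauchy interlacing gives $\lambda_1(\widetilde{M}) \geq \lambda_1(M) > 0$, so there is at least one positive eigenvalue. For the matching upper bound I would exhibit an $n$-dimensional subspace on which $\widetilde{M}$ is negative semidefinite. The arrowhead form~(\ref{m4}) shows that the bottom-right $(n-1)\times(n-1)$ block of $M$ is the negative semidefinite diagonal matrix $\Diag(-2ab\a_2,\dotsc,-2ab\a_n)$. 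Let $\mathcal{N} \subset \rr^{n+1}$ be the $n$-dimensional subspace consisting of vectors whose first coordinate vanishes; for $v = (y_1,\dotsc,y_n,z)' \in \mathcal{N}$, writing $y := (y_1,\dotsc,y_n)'$ (so $y_1 = 0$),
$$v'\widetilde{M}v = y'My + 2\sqrt{n}\,z\,y_1 = \sum_{i=2}^n (-2ab\a_i)\,y_i^2 \leq 0.$$
This forces at most one positive eigenvalue, and combined with the interlacing lower bound yields exactly one.

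I expect the main obstacle to be the first step: identifying the correct scaling $b/a$ in the corner of $\widetilde{V}$ so that the conjugated matrix is genuinely symmetric rather than merely similar to a symmetric one. Once this is in place, the arrowhead structure of $M$ uncovered in the proof of Theorem~\ref{ww} delivers the required negative semidefinite $n$-dimensional subspace essentially for free, and no appeal to the Haynsworth formula or to (N8) is needed.
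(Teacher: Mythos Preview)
Your proposal is correct and follows essentially the same approach as the paper: conjugate $\widetilde{D}$ by a block-diagonal matrix built from $UW$ to obtain a symmetric matrix with top-left block $M=W^{-1}U'DUW$ and borders along $e_1$, then use the $n$-dimensional subspace $\{y_1=0\}$ together with the arrowhead form~(\ref{m4}) to bound the number of positive eigenvalues. The only cosmetic difference is that the paper performs the symmetrization in two steps (first $K=\Diag(UW,1)$, then a scalar $G=\Diag(I,1/\delta)$ with $\delta=\sqrt{n}\,a/b$, landing on borders $e_1$ rather than your $\sqrt{n}\,e_1$), whereas you fold both into the single similarity $\widetilde{V}=\Diag(UW,\,b/a)$.
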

\begin{proof}
Setting
\[K:= \left[
\begin{array}{cccc}
U & 0 \\
0 & 1 \\
\end{array}
\right] \left[
\begin{array}{cccc}
W & 0 \\
0 & 1 \\
\end{array}
\right], 
\]
we find that 
the bordered matrix \[\widetilde{D}:=
\left[
\begin{array}{cccccc}
D & \1 \\
\1' & 0
\end{array}
\right]
\] is similar to 
\begin{equation} \label{Q}
Q:=K^{-1} \widetilde{D} K=
\left[
\begin{array}{cccccc}
W^{-1}U'DUW & W^{-1}U'\1  \\
\1'UW & 0
\end{array}
\right].
\end{equation}
Since $U$ is an orthogonal matrix with first column equal to $\frac{1}{\displaystyle \sqrt{n}}\1$,  
\begin{equation} \label{m5} 
\begin{aligned}
 W^{-1}U'\1 &=\Diag(\frac{a}{b},1,\dotsc,1)(\sqrt{n}e_{1}) \\
  &=(\sqrt{n}\frac{a}{b} ,0,\dotsc,0)' \\
  &=\sqrt{n} \frac{a}{b} e_1.
 \end{aligned}
 \end{equation}
 Put \[\delta:=\sqrt{n}\dfrac{a}{b}.\]
By $(\ref{m4})$ and $(\ref{m5})$,
\begin{equation} \label{Qe}
\begin{aligned}
Q&=\left[
\begin{array}{ccccc}
W^{-1}U'DUW & \delta e_{1} \\
\frac{1}{\delta} e_{1}' & 0 \\ 
\end{array}
\right] \\
&=
\left[
\begin{array}{ccccc}
(a^{2} + b^2)s_{1} & ab s_{2} & \hdots & ab s_{n} & \delta \\
abs_{2}  & -2ab \a_{2} & \hdots & 0 & 0\\
\vdots & \vdots & \ddots & \vdots & \vdots\\
ab s_{n} & 0 & \hdots & -2ab \a_{n} & 0 \\
\frac{1}{\displaystyle \delta} & 0 & 0 \hdots & 0 & 0
\end{array}
\right] 
\end{aligned}
\end{equation}
Define 
\[G:=\mbox{Diag}(I, \frac{1}{\delta}), \]
where $I$ is the $n \times n$ identity matrix.
Now,
\begin{equation} \label{GQG}
\begin{aligned}
G^{-1}QG &= 
\left[
\begin{array}{ccccc}
I & 0 \\
0 & \delta \\ 
\end{array}
\right] 
\left[
\begin{array}{ccccc}
W^{-1}U'DUW & \delta e_{1} \\
\frac{1}{\delta} e_{1}' & 0 \\ 
\end{array}
\right] \left[
\begin{array}{ccccc}
I & 0 \\
0 &  \frac{1}{\delta} \\ 
\end{array}
\right] \\
&= 
\left[
\begin{array}{ccccc}
W^{-1}U'DUW & e_{1} \\
 e_{1}' & 0 \\ 
\end{array}
\right].
\end{aligned}
\end{equation}
As $W^{-1}U'DUW$ is symmetric (see $(\ref{m4})$), $G^{-1}QG$ is symmetric. Thus, $\widetilde{D}$
is similar to a symmetric matrix. 
Therefore, $\widetilde{D}$ has only real eigenvalues.

We now claim that $\widetilde{D}$ has exactly one positive eigenvalue.
Let $x=(x_{1},\dotsc,x_{n})'$ be orthogonal to $e_1$. Because $x_{1}=0$, from $(\ref{m4})$, we have 
\begin{equation} \label{eperp}
x'W^{-1}U'DUWx=-2ab\sum_{i=2}^{n} x_{i}^{2} \a_{i} \leq 0. 
\end{equation}
The subspace
\[\nabla:=\{(x,r): x' e_{1}=0, ~r \in \rr \} \]
of $\rr^{n+1}$ has dimension $n$.
Consider a vector  $w:=(x,r)' \in \nabla$.  By $(\ref{GQG})$ and $(\ref{eperp})$,
\begin{equation*}
\begin{aligned}
w'G^{-1}QGw &= x'W^{-1}U'DUWx + r e_1'x \\
&=x'W^{-1}U'DUWx \leq 0.\\
\end{aligned}
\end{equation*}
Hence, $G^{-1}QG$ has at least $n$ non-positive eigenvalues. Because $D$ has exactly one positive eigenvalue, $G^{-1}QG$ has
at least one positive eigenvalue. So, $G^{-1}QG$ has exactly one positive eigenvalue.  Because $\widetilde{D}$ and $G^{-1}QG$ are similar, $\widetilde{D}$ has exactly one positive eigenvalue. The result is proved.
\end{proof}

\subsection{Sign pattern of $(\rho(D)I-D)^r$}
Let the eigenvalues of $D$ be $\delta_1,\dotsc,\delta_n$.
Since $D$ is a nonnegative matrix and has exactly one positive eigenvalue, $\rho(D)$ is the only positive eigenvalue of $D$.  Therefore,
\[\gamma_i:=\rho(D)-\delta_i \geq 0~~~i=1,\dotsc,n.\]  
Now,  let $A$ be an invertible matrix such that \[A^{-1}D A=  \Diag(\delta_1,\dotsc,\delta_n).\]
Define 
\[S:= \rho(D)I-D.\]
Then, for any $r>0$
$$S^r:=A\Diag(\gamma_1^r,\dotsc,\gamma_n^r) A^{-1}. $$ 
As a consequence of Theorem \ref{ww}, we now have the following result.
\begin{corollary}
If $0<r<1$, then
$S^r$ is an ${\bf M}$-matrix. 
\end{corollary}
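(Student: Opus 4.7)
The plan is to express $S^r$ in the form $\rho(D)^r I - T$ with $T := \rho(D)^r I - S^r$ entrywise nonnegative; once that is done, $S^r$ is automatically an $\mathbf{M}$-matrix, because by Theorem~\ref{ww} the only nonnegative eigenvalue of $T$ is $\rho(D)^r$, and Perron--Frobenius applied to the nonnegative matrix $T$ forces $\rho(T) = \rho(D)^r$.

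The substantive step is proving $T \ge 0$ entrywise. I would use the scalar integral representation
\begin{equation*}
x^r \;=\; \frac{\sin(\pi r)}{\pi}\int_0^\infty \frac{t^{r-1}\,x}{x+t}\,dt \qquad (x \ge 0,\ 0<r<1).
\end{equation*}
Subtracting this at $x$ from the same identity at $\rho(D)$ and combining fractions gives
\begin{equation*}
\rho(D)^r - x^r \;=\; \frac{\sin(\pi r)}{\pi}\int_0^\infty \frac{t^r\,(\rho(D)-x)}{(\rho(D)+t)(x+t)}\,dt.
\end{equation*}
Theorem~\ref{ww} says that $D$, and hence $S$, is diagonalizable with real spectrum, so the scalar identity lifts, via functional calculus in an eigenbasis of $S$ and conjugation back, to the matrix identity
\begin{equation*}
\rho(D)^r I - S^r \;=\; \frac{\sin(\pi r)}{\pi}\int_0^\infty \frac{t^r}{\rho(D)+t}\,D\,\bigl((\rho(D)+t)I - D\bigr)^{-1}\,dt.
\end{equation*}

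Entrywise nonnegativity of $T$ then reduces to a sign check on the integrand. For each $t > 0$ the matrix $(\rho(D)+t)I - D$ is a nonsingular $\mathbf{M}$-matrix, since its shift $\rho(D)+t$ strictly exceeds the Perron value $\rho(D)$, so its inverse is entrywise nonnegative; $D$ itself is entrywise nonnegative by the definition of a $\GDM$; and the scalar prefactor $t^r/(\rho(D)+t)$ is positive. Hence the integrand is entrywise nonnegative for every $t > 0$, and the integral converges ($O(t^r)$ near $0$ and $O(t^{r-2})$ near $\infty$), so $T \ge 0$. I expect the main obstacle to be a clean justification of the matrix-level integral identity, but with the diagonalizability of $D$ already supplied by Theorem~\ref{ww} this is a routine functional-calculus move, and the $\mathbf{M}$-matrix property of $S^r$ then follows.
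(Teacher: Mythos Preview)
Your argument is correct, and the integral-representation route is a legitimate and self-contained way to establish the result. One small imprecision: near $t=0$ the integrand is not $O(t^r)$ but $O(t^{r-1})$, since $(\rho(D)+t)I-D=S+tI$ is singular at $t=0$ (in the Perron eigendirection of $D$ you pick up a factor $1/t$); however, $t^{r-1}$ is still integrable on $(0,1)$ for $r>0$, so the convergence conclusion survives.

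The paper's proof is far shorter and takes a different route: it simply observes that $S=\rho(D)I-D$ is an $\mathbf{M}$-matrix (immediate, since $D$ is entrywise nonnegative and the shift equals $\rho(D)$) and then invokes Theorem~3.1 of Ando~\cite{Ando}, which states that fractional powers $S^r$ of an $\mathbf{M}$-matrix are again $\mathbf{M}$-matrices for $0<r<1$. What you have done is, in effect, re-derive Ando's theorem in this particular setting via the Stieltjes integral formula for $x\mapsto x^r$; the diagonalizability supplied by Theorem~\ref{ww} makes the functional-calculus step clean here but is not actually needed for Ando's general result. Your approach buys self-containment at the cost of length, while the paper's buys brevity by outsourcing the work to a known theorem.
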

\begin{proof}
Fix  $0 < r < 1$.
Since $S$ is an ${\bf M}$-matrix, by Theorem $3.1$ in \cite{Ando}, $
S^r$ is an ${\bf M}$-matrix.
The proof is complete.
\end{proof}

To illustrate,  we give the following example.

\begin{example}
Consider the matrix $D$ given in Example \ref{gedmexample}.  The eigenvalues of $D$ are approximately
\[-24,  -36.1322~\mbox{and}~100.1322.\]
Now the matrix $S$ is
\[S = \rho(D)I-D =  \left[\begin{array}{ccc}
88.1322 & -36 & -51 \\
-36 & 88.1322 & -51 \\
-43 & -43 & 84.1322\end{array}\right].\]
Then
\[
S^{\frac{1}{2}} =  \left[\begin{array}{ccc}
7.8037 & -3.3378 & -4.3690 \\
-3.3378 & 7.8037 & -4.3690 \\
-3.6836 & -3.6836 & 7.2073
\end{array}\right].
\]
It is easy to see that $S^{\frac{1}{2}}$ is an ${\bf M}$-matrix.
\end{example}

\subsection{Null space of a $\GDM$}
The main result about the null space of a $\GDM$ is $\nul(D) \subseteq \1^\perp$; so $\1 \in \range(D)$ and this in turn will be useful
to investigate the Moore-Penorse inverse of $D$.
\begin{theorem} \label{nuld}
If $D$ is a $\GDM$, then $\1 \in \nul(D)$.
\end{theorem}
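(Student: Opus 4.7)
The claim as stated---that $\1 \in \nul(D)$, i.e., $D\1 = 0$---is false for a generic $\GDM$. Using $D = a^2 \widetilde{L}J + b^2 J\widetilde{L} - 2abL$ together with $L\1 = 0$, a direct computation gives
\[D\1 = n a^2 \diag(L) + b^2 \tr(L)\,\1,\]
which is nonzero whenever $L \neq 0$; the matrix of Example~\ref{gedmexample} already has $D\1 = (99,99,102)' \neq 0$. The paragraph immediately preceding the theorem announces the intended content, ``the main result about the null space of a $\GDM$ is $\nul(D) \subseteq \1^\perp$,'' and this is also the statement that the next subsection on the Moore--Penrose inverse evidently needs. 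So the assertion as typeset appears to be a typo for $\nul(D) \subseteq \1^\perp$ (equivalently, $\1 \in \range(D)$), and my plan addresses that corrected claim.

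The strategy is to exploit the symmetric similarity constructed in Theorem~\ref{ww}. Set $M := W^{-1}U'DUW$; by $(\ref{m4})$, $M$ is a symmetric ``arrow'' matrix with $(1,1)$-entry $(a^2+b^2)s_1$, first-row/column off-diagonal entries $ab\,s_i$, and lower diagonal block $-2ab\,\Diag(\alpha_2,\dots,\alpha_n)$. Since $D = (UW)M(UW)^{-1}$ with $UW$ invertible, $\nul(D) = UW\cdot\nul(M)$. Because the first column of $U$ is $\1/\sqrt{n}$, one computes $\1'(UWy) = \sqrt{n}(b/a)\,y_1$, so $\nul(D) \subseteq \1^\perp$ is equivalent to showing $y_1 = 0$ for every $y \in \nul(M)$.

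Now fix $y$ with $My = 0$. For each $i \geq 2$, the $i$-th row of $My$ reads $ab\,s_i\,y_1 - 2ab\,\alpha_i\,y_i = 0$. Split into three cases by the pair $(\alpha_i, s_i)$: (a) if $\alpha_i > 0$, solve $y_i = s_i y_1/(2\alpha_i)$; (b) if $\alpha_i = 0$ and $s_i \neq 0$, the row forces $y_1 = 0$ and we are done; (c) if $\alpha_i = 0 = s_i$, $y_i$ is unconstrained but contributes $ab\,s_i\,y_i = 0$ to the first-row equation. Outside case (b), substitute into the first-row equation $(a^2+b^2)s_1 y_1 + ab\sum_{i\geq 2} s_i y_i = 0$ to obtain
\[y_1 \Bigl[(a^2+b^2)\tr(L) + \tfrac{ab}{2}\sum_{i:\,\alpha_i>0}\tfrac{s_i^2}{\alpha_i}\Bigr] = 0.\]
Since $L$ is nonzero and positive semidefinite, $s_1 = \tr(L) > 0$, making the bracket strictly positive, so $y_1 = 0$.

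The main obstacle is bookkeeping the degenerate case where $L$ has multiple zero eigenvalues (some $\alpha_i = 0$ with $i \geq 2$), which a priori disables the elimination step solving for $y_i$. The three-case split above is the right device: whenever $\alpha_i = 0$, the corresponding coefficient $s_i$ either kills $y_1$ directly (case b) or is itself zero (case c), so no $y_i$ of unknown value can pollute the first-row equation. Once this observation is in hand, the rest of the argument is a mechanical use of the arrow structure from Theorem~\ref{ww}.
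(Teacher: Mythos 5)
Your reading of the statement as a typo for $\nul(D)\subseteq\1^\perp$ is exactly what the paper's own proof establishes (it begins ``Let $Dx=0$; we claim $\1'x=0$''), and your argument is essentially the paper's: conjugate to the arrow matrix, use the positivity of the $(1,1)$-entry $(a^2+b^2)\tr(L)$ together with the nonnegativity of the $\alpha_i$ to force $y_1=0$. The only cosmetic difference is that the paper works with $U'DU$ rather than $W^{-1}U'DUW$ and avoids your three-case split by assuming $y_1\neq 0$ and deriving the single contradiction $y_{1}^{2}\Delta_{11}+\tfrac{b^2}{a^2}\bar{y}'S\bar{y}=0$, a sum of nonnegative terms with the first strictly positive.
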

\begin{proof}
Following same notation as in Theorem \ref{ww}, we have
by equation $(\ref{pdp})$, 
$$\Delta:=U'DU=
\left[
\begin{array}{ccccc}
(a^{2} + b^2)s_{1} & b^2 s_{2} & \hdots & b^2 s_{n} \\
a^{2} s_{2} & -2ab \a_{2} & \hdots & 0\\
\vdots & \vdots & \ddots & \vdots \\
a^2 s_{n} & 0 & \hdots & -2ab \a_{n}
\end{array}
\right].$$ 
As
$\Delta_{11}=(a^2+b^2) s_{1}$ and $s_1=\tr(L)$, $\Delta_{11}>0$. 
Let $Dx=0$. We claim that $\1' x=0$. 
Since $U \Delta U'=D$ and $Dx=0$, we have 
\begin{equation} \label{Delta}
\Delta U'x=0 .
\end{equation}
If
$v:=(s_{2},s_{3},\dotsc,s_{n})'$ and
$S:=\Diag(2ab\lambda_2,\dotsc,2ab\lambda_n)$, then
\[ \Delta=\left[
\begin{array}{cccc}
\Delta_{11} & b^2v' \\
a^2 v & -S
\end{array}
\right] .\] Put $y:=U'x$. By writing $y=(y_{1},\bar{y})'$, where $y_1 \in \rr$ and $\bar{y}' \in \rr^{n-1}$,
from $(\ref{Delta})$, we have 
\begin{equation} \label{e11}
y_{1} \Delta_{11} + b^2 v' \bar{y}=0
\end{equation}
\begin{equation} \label{e22}
y_1 a^2 v=S \bar{y} .
\end{equation}
If possible, let $y_1 \neq 0$.
Then from $(\ref{e22})$ we have, \[v=\frac{1}{a^2y_{1}} S \bar{y}. \]
Thus by equation $(\ref{e11})$, 
\[y_1 \Delta_{11} + \frac{b^2}{a^2 y_{1}} \bar{y}'S \bar{y}=0,\]
or equivalently,
\begin{equation} \label{ss1}
y_{1}^{2} \Delta_{11} + \frac{b^2}{a^2} \bar{y}' S \bar{y}=0.  
\end{equation}
As $S$ is positive semidefinite, $\bar{y}'S \bar{y}$ is nonnegative. Because
$\Delta_{11}>0$,
\[y_{1}^{2} \Delta_{11} + \frac{b^2}{a^2} \bar{y}' S \bar{y} >0, \]
contradicting
$(\ref{ss1})$.
 Hence, $y_1=0$. 
Since $Uy=x$ and $y=(0,y_2,\dotsc,y_n)'$, 
\[x \in \span\{Ue_2,\dotsc,Ue_n\}=\1^\perp.\] So,
$\1'x=0$. The proof is complete.
\end{proof}

\begin{corollary}
$\1 \in \nul(D')$.
\end{corollary}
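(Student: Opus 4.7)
The plan is to observe that $D'$ is itself a $\GDM$ (built from the same generalized Laplacian, with only the two scalar parameters swapped), and then to invoke Theorem~\ref{nuld} applied to $D'$.

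To see this, I would transpose the defining identity (\ref{D}). Since $L$ is symmetric, $\L=\Diag(L)$ is diagonal (hence symmetric), and $J=\1\1'$ is symmetric, we have $(\L J)' = J\L$ and $(J\L)' = \L J$. Consequently
\[
D' \;=\; a^2 J\L + b^2 \L J - 2ab L \;=\; \tilde a^{\,2}\,\L J + \tilde b^{\,2}\, J\L - 2\tilde a \tilde b\, L,
\]
where $\tilde a := b$ and $\tilde b := a$ are both positive. Comparing with (\ref{D}), this is exactly the $\GDM$ associated with the same generalized Laplacian $L$ and the positive scalars $\tilde a,\tilde b$. Hence $D'$ is a $\GDM$.

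Theorem~\ref{nuld} applied to $D'$ in place of $D$ now yields the corollary immediately. There is no real obstacle here; the whole argument rests on the elementary fact that the defining formula of a $\GDM$ is invariant under transposition up to the harmless relabeling $a\leftrightarrow b$, and that Theorem~\ref{nuld} is stated for \emph{every} $\GDM$ and so applies with the swapped parameters.
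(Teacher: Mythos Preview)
Your proof is correct and follows exactly the same approach as the paper: the paper's own argument is simply ``As $D'$ is also a $\GDM$, $\1 \in \nul(D')$.'' You have merely spelled out, via the transposition of~(\ref{D}), the one-line observation that $D'$ is the $\GDM$ obtained by swapping $a$ and $b$, which is implicit in the paper's proof.
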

\begin{proof}
As $D'$ is also a $\GDM$,
$\1 \in \nul(D')$.
\end{proof}

\begin{corollary} \label{range}
$\1 \in \range(D) \cap \range(D')$.
\end{corollary}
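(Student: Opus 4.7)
The plan is to obtain both containments $\1 \in \range(D)$ and $\1 \in \range(D')$ as one-line consequences of the preceding results, via the standard identity $\range(M) = \nul(M')^\perp$ valid for any real square matrix $M$. No new machinery is required; the substantive work has already been done in Theorem \ref{nuld}.

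First I would unpack what Theorem \ref{nuld} and the corollary immediately preceding actually deliver: every $x$ in $\nul(D)$ is orthogonal to $\1$, i.e. $\nul(D) \subseteq \1^\perp$, and likewise $\nul(D') \subseteq \1^\perp$ (the latter obtained by observing that $D'$ is itself a $\GDM$, with the roles of $a$ and $b$ interchanged in Definition \ref{p}, so Theorem \ref{nuld} applies to $D'$ verbatim). Taking orthogonal complements in both containments then gives $\1 \in \nul(D)^\perp$ and $\1 \in \nul(D')^\perp$.

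Finally I would apply the fundamental orthogonal decomposition $\range(M) = \nul(M')^\perp$. With $M = D'$ this turns $\1 \in \nul(D)^\perp$ into $\1 \in \range(D')$; with $M = D$ it turns $\1 \in \nul(D')^\perp$ into $\1 \in \range(D)$. Intersecting yields the desired conclusion $\1 \in \range(D) \cap \range(D')$.

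The main obstacle is essentially nonexistent here: there is nothing to compute and no delicate step to navigate, since Theorem \ref{nuld} carries all the weight. The only minor verification worth mentioning explicitly is that $D'$ is a $\GDM$ (so that Theorem \ref{nuld} indeed applies to it), which is immediate from $(D')_{ij} = b^2 l_{ii} + a^2 l_{jj} - 2ab\, l_{ij}$ together with the symmetry of $L$.
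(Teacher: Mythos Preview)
Your proposal is correct and follows essentially the same approach as the paper: both arguments invoke the fundamental identity $\nul(M)=\range(M')^{\perp}$ together with the containments $\nul(D)\subseteq\1^{\perp}$ and $\nul(D')\subseteq\1^{\perp}$ supplied by Theorem~\ref{nuld} and its immediate corollary. The paper's proof is a single sentence to this effect; your write-up simply makes the two applications (to $M=D$ and $M=D'$) and the verification that $D'$ is again a $\GDM$ explicit.
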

\begin{proof}
This follows easily since $\nul(D)=\range(D')^{\perp}$.
\end{proof}

\begin{corollary} \label{1'D1=0}
$\1' D^{\dag} \1=0$ if and only if there exists $f \in \1^{\perp}$ such that $Df=\1$. 
\end{corollary}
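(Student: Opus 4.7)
The plan is to describe the full solution set of $Df = \1$ explicitly and then reduce the biconditional to a one-line projection computation. The two inputs I would draw on are $\1 \in \range(D)$ (Corollary \ref{range}) and $\nul(D) \subseteq \1^{\perp}$ (this is what Theorem \ref{nuld} actually establishes, as its proof shows that every $x$ with $Dx=0$ satisfies $\1'x = 0$).

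The first step is to parametrize the solutions of $Df = \1$. Since $\1 \in \range(D)$, the pseudoinverse identity $DD^{\dag}\1 = \1$ holds, so $f_{0} := D^{\dag}\1$ is a distinguished solution, and every solution has the form $f = f_{0} + z$ with $z \in \nul(D)$. With this in hand both implications follow by testing against $\1$.

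For the forward direction, if $\1' D^{\dag} \1 = 0$, I would simply take $f := f_{0} = D^{\dag}\1$; then $Df = \1$ and $\1' f = \1' D^{\dag} \1 = 0$, so $f \in \1^{\perp}$. For the converse, given $f \in \1^{\perp}$ with $Df = \1$, write $f = f_{0} + z$ with $z \in \nul(D)$. By Theorem \ref{nuld} the vector $z$ lies in $\1^{\perp}$, so $\1'z = 0$, and taking the inner product of $f = f_{0} + z$ with $\1$ yields $0 = \1'f = \1' D^{\dag} \1 + \1'z = \1' D^{\dag} \1$, as desired.

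I do not expect any real obstacle: once the solution set of $Df = \1$ is described in terms of $D^{\dag}$ and $\nul(D)$, the corollary is a bookkeeping consequence of the two previously established facts. The only point requiring care is to invoke $\nul(D) \subseteq \1^{\perp}$ in the reverse direction to kill the $\1'z$ term.
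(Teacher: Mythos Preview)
Your proposal is correct and follows essentially the same route as the paper: both arguments use $\1 \in \range(D)$ to get $DD^{\dag}\1=\1$, parametrize the solution set of $Df=\1$ as $D^{\dag}\1+\nul(D)$, and then invoke $\nul(D)\subseteq \1^{\perp}$ (which, as you note, is what the proof of Theorem~\ref{nuld} actually establishes). Your forward direction is in fact slightly cleaner than the paper's, since you exhibit $f=D^{\dag}\1$ directly rather than starting from an arbitrary solution.
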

\begin{proof}
Suppose $\1' D^{\dag} \1=0$. Let $Dx=\1$. Since
$DD^{\dag}\1=\1$, we have $x-D^{\dag} \1 \in \nul(D)$ and because $\nul(D) \subseteq \1^{\perp}$,   
$\1' (x-D^{\dag}) \1=0$. So, $(\1'x) \1=0$; hence $\1' x=0$. 
Conversely, let $Df=\1$ and $\1'f=0$.  Since, $DD^{\dag}\1=\1$ and 
$\nul(D) \subseteq \1^{\perp}$, $f-D^\dag \1 \in \1^{\perp}$. 
Thus, $\1' D^\dag \1=0$. 
\end{proof}

\begin{corollary} \label{cor5}
The following are equivalent.
\begin{enumerate}
\item[\rm (a)] $\1' D^{\dag} \1 \neq 0$.
\item[\rm (b)] $\nul(D)=\nul(D')=\nul(L) \cap \1^{\perp}$.
\end{enumerate}
\end{corollary}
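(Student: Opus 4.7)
The plan is to obtain an explicit description of $\nul(D)$, observe that the same description applies to $\nul(D')$, and then translate both (a) and (b) into the single condition that $\diag(L)$ is orthogonal to $\nul(L)\cap\1^{\perp}$.

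First, I would expand $Dx$ directly from $(\ref{D})$. Using $Jx = (\1'x)\1$, $\L\1 = \diag(L)$, and $\1'\L x = \diag(L)'x$, this gives
\[
Dx = a^{2}(\1'x)\diag(L) + b^{2}(\diag(L)'x)\1 - 2ab\,Lx.
\]
By Theorem~\ref{nuld}, any $x \in \nul(D)$ satisfies $\1'x = 0$, so the first term drops out and $Dx=0$ reduces to the rank-one identity $2ab\,Lx = b^{2}(\diag(L)'x)\1$. Because $L$ is symmetric with $\1 \in \nul(L)$, we have $\range(L) = \nul(L)^{\perp} \perp \1$; so the only scalar multiple of $\1$ in $\range(L)$ is zero. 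This forces $\diag(L)'x = 0$ and $Lx = 0$ separately, and reversing the implications yields
\[
\nul(D) = \nul(L)\cap \1^{\perp}\cap \diag(L)^{\perp}.
\]
Since $D' = a^{2}J\L + b^{2}\L J - 2ab\,L$ is the same construction with $a,b$ interchanged, the identical argument gives $\nul(D') = \nul(D)$, so the two equalities in (b) collapse to the single condition $\nul(D) = \nul(L)\cap\1^{\perp}$.

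Next, I would notice that this single condition amounts to saying the extra defining constraint $\diag(L)'x = 0$ is automatic on $\nul(L)\cap\1^{\perp}$, i.e.\ that $\diag(L)\perp(\nul(L)\cap\1^{\perp})$. To handle (a), I would invoke Corollary~\ref{1'D1=0}: $\1'D^{\dag}\1 = 0$ iff there exists $f \in \1^{\perp}$ with $Df = \1$. Substituting such an $f$ into the expansion above and applying the same $\range(L)$ obstruction to $b^{2}(\diag(L)'f)\1 - 2ab\,Lf = \1$, one sees that a solution $f$ exists iff there is some $f \in \nul(L)\cap\1^{\perp}$ with $\diag(L)'f = 1/b^{2}$; by rescaling, this happens iff $\diag(L)$ is not orthogonal to $\nul(L)\cap\1^{\perp}$. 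Negating, (a) becomes equivalent to $\diag(L)\perp(\nul(L)\cap\1^{\perp})$, matching (b).

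The only delicate step is the recurring rank-one obstruction $\1 \notin \range(L)$; this is where the generalized Laplacian structure ($L$ symmetric with $L\1 = 0$) enters decisively, and it drives every reduction from an equation of the form $Lx = c\1$ to the pair $c = 0$ and $Lx = 0$.
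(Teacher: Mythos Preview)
Your argument is correct. Both you and the paper rely on Theorem~\ref{nuld} and Corollary~\ref{1'D1=0}, and both expand $Df$ for $f\in\1^{\perp}$ to the expression $b^{2}(\diag(L)'f)\1-2ab\,Lf$. The difference is structural: the paper argues the two implications (a)$\Rightarrow$(b) and (b)$\Rightarrow$(a) directly, using the quadratic-form identity $x'Dx=-2ab\,x'Lx$ on $\1^{\perp}$ to force $Lx=0$, and then invokes Corollary~\ref{1'D1=0} case-by-case. You instead use the linear obstruction $\range(L)\subseteq\1^{\perp}$ to split the equation $Dx=0$ once and for all, obtaining the explicit description $\nul(D)=\nul(L)\cap\1^{\perp}\cap\diag(L)^{\perp}$, and then translate both (a) and (b) into the single condition $\diag(L)\perp(\nul(L)\cap\1^{\perp})$. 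Your route is a bit more informative (the null-space formula is a result in its own right, and it makes $\nul(D)=\nul(D')$ immediate since the description is independent of $a,b$), while the paper's route is slightly shorter for the bare equivalence.
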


\begin{proof}
Assume (a). To prove (b),
it suffices to show that 
\[\nul(D)=\nul(L) \cap \1^{\perp}. \]
Let $x \in \nul(D)$. By Theorem $\ref{nuld}$, $x \in \1^{\perp}$. Since
\begin{equation*}
\begin{aligned}
x'Dx &=a^2 x' \L J x +b^2x' J \L x -(2ab) x'Lx \\
&= -2ab x' Lx=0.
\end{aligned}
\end{equation*}
 Because $x'Lx=0$ and $L$ is positive semidefinite, $Lx=0$. So, $x \in \nul(L)$. Thus, 
\[\nul(D) \subseteq \nul(L) \cap \1^\perp.\] Now, let $f \in \nul(L) \cap \1^{\perp}$. Then,
\begin{equation} \label{eqdf}
\begin{aligned}
Df&=a^2 \L J f+ b^2 J \L f-2ab Lf \\
&=b^2J \L f =b^2 \11' \L f 
=b^2(\1' \L f) \1.
\end{aligned}
\end{equation}
From Corollary \ref{1'D1=0}, we find that $\1'\L f =0$.  
Thus, $f \in \nul(D)$. This proves (a) $\Rightarrow$ (b).

Assume (b). If $\1'D^{\dag} \1=0$, then by Corollary \ref{1'D1=0}, there exists $f \in \1^\perp$ such that 
$Df=\1$. Therefore, $f'Df=0$ and this gives $f'Lf=0$. Since $L$ is positive semidefinite, $Lf=0$ and therefore by our assumption, $f \in \nul(D)$
contradicting $Df=\1$.
 Thus (b) $\Rightarrow$ (a). The proof is complete. 
\end{proof}

\begin{corollary} \label{nullequal}
$\nul(D)=\nul(\L J + J \L-2 L)$.
\end{corollary}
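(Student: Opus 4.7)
The plan is to identify both $\nul(D)$ and $\nul(\L J + J\L - 2L)$ with a single subspace, namely $\{\, x \in \1^\perp : Lx = 0 \text{ and } \1'\L x = 0 \,\}$. Writing $M := \L J + J\L - 2L$ for brevity, the key preliminary observation is that $M$ is itself a $\GDM$: it arises from the same generalized Laplacian $L$ via Definition \ref{p} by taking $a = b = 1$. Consequently Theorem \ref{nuld} applies to $M$ just as it does to $D$, so both null spaces are contained in $\1^\perp$.

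On $\1^\perp$ the defining formulas collapse considerably. For any $x \in \1^\perp$ one has $Jx = \1(\1'x) = 0$, and since $L$ is symmetric with $L\1 = 0$ one also has $\1' L x = (L\1)' x = 0$. Substituting these into $(\ref{D})$ and into the definition of $M$ yields
\[ Dx = b^2(\1'\L x)\1 - 2ab\,Lx, \qquad Mx = (\1'\L x)\1 - 2 L x. \]

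The final step is a decoupling argument identical in spirit to the one used in Corollary \ref{cor5}. Taking the inner product of $Dx = 0$ with $\1$ kills the $Lx$ contribution (again because $\1'Lx = 0$) and forces $\1'\L x = 0$; feeding this back into the displayed formula then forces $Lx = 0$. The same computation applied to $Mx = 0$ produces exactly the same two conditions, and conversely $Lx = 0$ together with $\1'\L x = 0$ immediately implies $Dx = 0$ and $Mx = 0$. Combining the three steps gives $\nul(D) = \nul(M)$.

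I do not anticipate any real obstacle here: once the identification of $M$ as a $\GDM$ is in hand, the proof is essentially the $a = b = 1$ instance of the computation already carried out in equation $(\ref{eqdf})$ and the surrounding lines of Corollary \ref{cor5}. The only care needed is to keep track of the parameters $a$ and $b$ so that the pair-with-$\1$ trick strips the correct scalars from both expressions in the same way.
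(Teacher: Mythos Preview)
Your proposal is correct and is essentially the same argument as the paper's. The paper also reduces to $\1^\perp$ via Theorem~\ref{nuld}, simplifies $Dx$ and $Ex$ there to $b^2(\1'\L x)\1 - 2ab\,Lx$ and $(\1'\L x)\1 - 2Lx$, and decouples using $Lx \in \1^\perp$; the only cosmetic difference is that the paper phrases this as two inclusions $\nul(D)\subseteq\nul(E)$ and $\nul(E)\subseteq\nul(D)$, whereas you identify both with the common set $\{x\in\1^\perp: Lx=0,\ \1'\L x=0\}$.
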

\begin{proof}
Put $$E:=\L J + J \L -2L.$$
Let $x \in \nul(D)$. Then by Theorem $\ref{nuld}$,
$x \in \1^\perp$. So,
\begin{equation} \label{dx}
\begin{aligned}
0=Dx &=a^2 \L J x + b^2 J \L x -2 a b L x \\
&=b^2 J \L x -2a b Lx \\
&= b^2 \1 \1' \L x  -2 a b L x.
\end{aligned}
\end{equation}
Because $Lx$  is an element in $\1^\perp$, we get by $(\ref{dx})$,
\begin{equation} \label{1'Lx}
\1' \L x=0 ~~\mbox{and}~~ Lx=0.
\end{equation}
On the other hand, we have
\begin{equation*}
\begin{aligned}
Ex =\L J x + J \L x -2 L x 
&=J \L x -2Lx \\
&=  \1 \1' \L x  -2 L x.
\end{aligned}
\end{equation*}
By $(\ref{1'Lx})$,  it now follows that
$Ex=0$. Thus, $x \in \nul(E)$. So, $$\nul(D) \subseteq \nul(E).$$ Similar argument leads to
$\nul(E) \subseteq \nul(D)$. The proof is complete.
\end{proof}

\subsection{Moore-Penrose inverse of a $\GDM$}
We now obtain some properties of the Moore-Penrose inverse of $D$. The following result says that $\1' D^{-}\1$ is invariant for any choice of $g$-inverse of $D$. 

\begin{theorem}
If $D$ is a $\GDM$, and if $D^-$ is a generalized inverse of $D$, then $\1' D^- \1=\1 D^\dag \1$.
\end{theorem}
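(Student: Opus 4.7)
The plan is to invoke the classical fact that for a matrix $A$, the bilinear form $x' A^- y$ is independent of the choice of generalized inverse $A^-$ provided that $y \in \range(A)$ and $x \in \range(A')$. Our preceding Corollary~\ref{range} gives exactly the hypothesis needed in the case $A = D$, $x = y = \1$: namely, $\1 \in \range(D) \cap \range(D')$.

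First I would make the invariance explicit. By Corollary \ref{range}, there exist vectors $u, v \in \rr^n$ with
\[
\1 = Du, \qquad \1 = D'v, \quad \text{i.e.,} \quad \1' = v' D.
\]
For \emph{any} generalized inverse $D^-$ of $D$, using the defining identity $D D^- D = D$, we compute
\[
\1' D^- \1 \;=\; (v'D)\,D^-\,(Du) \;=\; v'\,(D D^- D)\,u \;=\; v' D u.
\]
The right-hand side $v' D u$ does not involve $D^-$ at all, so $\1' D^- \1$ takes the same value for every generalized inverse.

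Applying this to the Moore--Penrose inverse $D^{\dagger}$, which is itself a particular generalized inverse, yields $\1' D^{\dagger} \1 = v' D u$ as well. Combining the two equalities gives $\1' D^- \1 = \1' D^{\dagger} \1$, as required.

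There is no real obstacle here, since the hard work has already been done in establishing $\1 \in \range(D) \cap \range(D')$ via Theorem~\ref{nuld} and its corollaries. The only thing to be careful about is using the defining property $DD^-D = D$ rather than any stronger property (such as $D^- D D^- = D^-$) that a general $g$-inverse need not satisfy; the argument above uses only the weakest $g$-inverse axiom, so it is valid for every choice of $D^-$.
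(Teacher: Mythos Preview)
Your proof is correct and follows essentially the same approach as the paper: both use Corollary~\ref{range} to factor $\1$ through $D$ and $D'$, then apply the single $g$-inverse axiom $DD^{-}D=D$ to collapse $\1' D^{-}\1$ to an expression independent of the choice of $D^{-}$. The only cosmetic difference is that the paper takes the specific choices $u=D^{\dag}\1$ and $v'=\1'D^{\dag}$ (so that the common value $v'Du$ is seen immediately to equal $\1'D^{\dag}\1$), whereas you work with abstract $u,v$ and then specialize; the underlying argument is identical.
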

\begin{proof}
As $\1$ is an element in the column space of $D$ and $D'$, we have
\[\1' DD^\dag=\1'~~\mbox{and}~~DD^{\dag} \1=\1. \]
So, \[\1' D^{-} \1=\1' D^\dag D D^{-} DD^{\dag} \1.\]
As $D D^{-}D=D,$ we get 
\[\1' D^- \1=\1' D^\dag D D^\dag \1=\1' D^\dag \1. \]
\end{proof}

\begin{theorem} \label{1D1}
$1' D^{\dag} \1 \geq 0$.
\end{theorem}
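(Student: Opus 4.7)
The plan is to reduce the inequality to a statement about the symmetric matrix $M := W^{-1} U' D\, UW$ from the proof of Theorem \ref{ww}, and then apply the Haynsworth inertia formula (N61) to the border of $M$ by $e_1$ — this bordered matrix is (up to similarity) the matrix $\widetilde{D}$ of Corollary \ref{bordered}, about which the positive inertia is already known.

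First, since $D = (UW)\, M\, (UW)^{-1}$ and $\1 \in \range(D)$ by Corollary \ref{range}, applying $(UW)^{-1} = W^{-1} U'$ to $\1$ gives
\[
W^{-1} U' \1 \;=\; \sqrt{n}\,\tfrac{a}{b}\, e_1 \;=\; \delta\, e_1,
\]
so $e_1 \in \range(M)$. A direct computation shows $N := (UW)\, M^\dag\, (UW)^{-1}$ satisfies $DND = D$ and is therefore a $g$-inverse of $D$. By the invariance theorem proved just above, $\1' D^\dag \1 = \1' N \1$, and using $\1' U = \sqrt{n}\, e_1'$ together with $W_{11} = b/a$ yields
\[
\1' N \1 \;=\; (\1' UW)\, M^\dag\, (W^{-1} U' \1) \;=\; \sqrt{n}\,\tfrac{b}{a}\, e_1' M^\dag\, \sqrt{n}\,\tfrac{a}{b}\, e_1 \;=\; n\, e_1' M^\dag e_1.
\]
Thus it is enough to prove $e_1' M^\dag e_1 \geq 0$.

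Now consider the symmetric bordered matrix
\[
H \;:=\; \left[\begin{array}{cc} M & e_1 \\ e_1' & 0 \end{array}\right],
\]
which coincides with $G^{-1} Q G$ in the proof of Corollary \ref{bordered}; hence $H$ is similar to $\widetilde{D}$ and has exactly one positive eigenvalue. Since $M$ is similar to $D$, $M$ too has exactly one positive eigenvalue. With $e_1 \in \range(M)$ verified, the generalized Haynsworth identity (N61) applied to $M$ and $p = e_1$ gives
\[
\inertia(H) \;=\; \inertia(M) + \inertia\bigl(-\, e_1' M^\dag e_1\bigr).
\]
If the scalar $e_1' M^\dag e_1$ were strictly negative, the right-hand side would have at least two positive eigenvalues, forcing $\mu(H) \geq 2$, a contradiction. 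Therefore $e_1' M^\dag e_1 \geq 0$, and consequently $\1' D^\dag \1 \geq 0$.

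The main obstacle is the bookkeeping around the non-orthogonal similarity $UW$: strictly speaking $N$ is not the Moore--Penrose inverse of $D$ (because $UW$ is not an isometry), so the key identity $\1' D^\dag \1 = n\, e_1' M^\dag e_1$ depends crucially on the preceding $g$-inverse invariance theorem. A cleaner route that sidesteps Haynsworth would be to invoke (N8) on $H$ directly: writing any $z$ with $Mz = \delta e_1$ as $z = z_1 e_1 + w$ with $w \in e_1^\perp$, equating the two expressions $z'Mz = \delta z_1$ and $z'Mz = z_1^2 M_{11} + 2 z_1 e_1'Mw + w'Mw$, and using $w'Mw \leq 0$ from (N8), gives $z_1\bigl(z_1 M_{11} - \delta\bigr) \leq 0$; since $M_{11} = (a^2+b^2)\tr(L) > 0$ and $\delta > 0$, this forces $z_1 \geq 0$, which is again equivalent to $\1' D^\dag \1 \geq 0$.
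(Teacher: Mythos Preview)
Your argument is essentially the paper's: both border $M=W^{-1}U'DUW$ by $e_1$, identify the resulting matrix with $G^{-1}QG\sim\widetilde D$ from Corollary~\ref{bordered}, and read off the sign of the Schur complement $-e_1'M^\dag e_1$ via the Haynsworth formula; the paper simply writes that complement as $-e_1'W^{-1}U'D^\dag UWe_1=-\tfrac{1}{n}\1'D^\dag\1$ directly (implicitly using that $W^{-1}U'D^\dag UW$ is a $g$-inverse of $M$), whereas you route through the $g$-inverse invariance theorem to reach the same identification more explicitly. Your closing alternative via (N\ref{(N8)}) is correct but is not part of the paper's proof of this theorem.
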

\begin{proof}
In view of equation (\ref{GQG}), $\widetilde{D}$ is similar to 
\begin{equation}
S:=\left[
\begin{array}{cccccc}
W^{-1}U'DUW & e_1 \\
e_1' & 0
\end{array}
\right],
\end{equation}
which is symmetric. 
We claim the following. \\
{\bf Claim:} There exists $x \in \rr^n$ such that $W^{-1}U'DUWx=e_1$. 

Since $UWe_1 \in \span\{\1\}$ and $\1$ is in the column space of $D$,  there exists $y \in \rr^n$ such that
$Dy=UWe_1$. As $U$ and $W$ are non-singular, $y=UWx$ for some $x \in \rr^n$. Thus,
$DUWx=UWe_1$ and the claim is true.

Applying inertia formula to $S$ (see (N\ref{(N61)})), we have 
\[\inertia(S)=\inertia(W^{-1}U'DUW)+ \inertia(-e_1'W^{-1} U' D^{\dag} UW e_1). \]
By corollary \ref{bordered}, $\widetilde{D}$ has exactly one positive eigenvalue.
Since $W^{-1}U'DUW$ has exactly one positive eigenvalue, by the above formula,
\[-e_1'W^{-1}U'D^{\dag}UWe_1 \leq 0. \]
From the definition of $U$ and $W$, we see that
\[-e_1'W^{-1}U'D^\dag UWe_1=-\1'D^{\dag}\1, \]
and hence $\1'D^\dag \1 \geq 0$. This proves the result.
\end{proof}

\begin{theorem} \label{pd+p}
$-PD^\dag P$ is positive semidefinite.
\end{theorem}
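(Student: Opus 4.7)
My plan is to reduce the claim to a question about the symmetric representative $\hat D=W^{-1}U'DUW$ from Theorem~\ref{ww}. The statement that $-PD^{\dag}P$ is positive semidefinite is equivalent to $y'D^{\dag}y\leq 0$ for every $y\in\1^{\perp}$. Because $U$ is orthogonal, $\tilde D:=U'DU$ satisfies $\tilde D^{\dag}=U'D^{\dag}U$; and since $U'\1=\sqrt{n}\,e_1$ we have $P=UP_eU'$ where $P_e:=I-e_1e_1'$. So the target becomes: show $z'\tilde D^{\dag}z\leq 0$ for every $z\in e_1^{\perp}$.

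The crux — and what I expect to be the main obstacle — is the identity $\tilde D^{\dag}=W\hat D^{\dag}W^{-1}$. A similarity does not in general commute with the Moore--Penrose inverse, so I would verify the four Moore--Penrose axioms for $W\hat D^{\dag}W^{-1}$ directly, using $\tilde D=W\hat DW^{-1}$, the symmetry of $\hat D$, and the rank-one structure $W^2=I+\bigl((b/a)^2-1\bigr)e_1e_1'$. The two algebraic axioms follow from $\hat D\hat D^{\dag}\hat D=\hat D$ and $\hat D^{\dag}\hat D\hat D^{\dag}=\hat D^{\dag}$ after obvious cancellations. For the two symmetry axioms, one computes $\tilde D\cdot W\hat D^{\dag}W^{-1}=W\hat D\hat D^{\dag}W^{-1}$, whose transpose is $W^{-1}\hat D\hat D^{\dag}W$. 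These agree iff $W^2$ commutes with the orthogonal projection $\hat D\hat D^{\dag}$ onto $\range(\hat D)$; since $W^2-I$ is supported on $\mathop{\rm span}(e_1)$, the commutation collapses to the single equation $\hat D\hat D^{\dag}e_1=e_1$, i.e., to $e_1\in\range(\hat D)$. This containment is immediate from Corollary~\ref{range}: as $\1\in\range(D)$ and $UW$ is invertible,
\[
\range(\hat D)=W^{-1}U'\range(D)\;\ni\; W^{-1}U'\1=\frac{\sqrt{n}\,a}{b}\,e_1.
\]

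With $\tilde D^{\dag}=W\hat D^{\dag}W^{-1}$ in hand the rest is short. Any $z\in e_1^{\perp}$ is fixed by $W$ and by $W^{-1}$, so
\[
z'\tilde D^{\dag}z=z'W\hat D^{\dag}W^{-1}z=z'\hat D^{\dag}z,
\]
and it suffices to show $z'\hat D^{\dag}z\leq 0$ on $e_1^{\perp}$. For this I would apply the Haynsworth inertia formula (N\ref{(N61)}) to the symmetric bordered matrix $\left[\begin{array}{cc}\hat D^{\dag} & e_1\\ e_1' & 0\end{array}\right]$: the hypothesis $e_1\in\range(\hat D^{\dag})=\range(\hat D)$ is the containment just established, and $(\hat D^{\dag})^{\dag}=\hat D$ since $\hat D$ is symmetric, so the generalized Schur complement equals $-e_1'\hat De_1=-\hat D_{11}=-(a^2+b^2)\tr(L)<0$. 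Combined with the fact that $\hat D^{\dag}$ inherits the single positive eigenvalue of $\hat D$ from Theorem~\ref{ww}, the inertia formula gives exactly one positive eigenvalue in the bordered matrix. Finally (N\ref{(N8)}) yields $z'\hat D^{\dag}z\leq 0$ for every $z\in e_1^{\perp}$, which completes the proof.
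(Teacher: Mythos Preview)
Your proof is correct and follows essentially the same route as the paper: both pass to the symmetric representative $\hat D=W^{-1}U'DUW$, identify $\hat D^{\dag}$ with $W^{-1}U'D^{\dag}UW$, apply the Haynsworth formula to the bordered matrix $\left[\begin{smallmatrix}\hat D^{\dag}&e_1\\e_1'&0\end{smallmatrix}\right]$, and invoke (N\ref{(N8)}) on $e_1^{\perp}$. Your verification of the Moore--Penrose identity via the four axioms and the containment $e_1\in\range(\hat D)$ is more careful than the paper, which simply asserts the identity; conversely, the paper proves the symmetry of $PD^{\dag}P$ as a separate first step, whereas in your argument it is implicit in that same identity (since $W$ fixes $e_1^{\perp}$, one gets $P_e\tilde D^{\dag}P_e=P_e\hat D^{\dag}P_e$, which is symmetric).
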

\begin{proof}
We first claim that $PD^\dag P$ is symmetric. Let $U$ be the orthogonal matrix given in Theorem $\ref{ww}$. Define
\begin{equation} \label{f}
f_i:=Ue_i~~~i=2,\dotsc,n. 
\end{equation}
Each $f_i \in \1^\perp$.  To complete the proof of the claim, it suffices to show that
\[f_{i}'D^\dag f_j=f_{j}'D^\dag f_i~~~i,j=2,\dotsc,n.\]
 Fix $i \neq j$ and $i,j \geq 2$. 
We know from Theorem $\ref{ww}$ that $W^{-1}U'D^\dag UW$ is symmetric. Hence,
\begin{equation} \label{equal}
e_{i}'W^{-1}U'D^\dag U W e_{j}=e_{j}'W^{-1}U'D^\dag U We_{i}. 
\end{equation}
Since \[e_{i}'W^{-1}= e_{i}' ~~\mbox{and}~~We_{j}=e_{j}~~~\mbox{for all}~i,j \geq 2,\] we have
\[e_i'W^{-1}U'D^\dag UW e_j=e_i'U'D^\dag Ue_j; \]
hence by $(\ref{f})$,
\[ e_i'W^{-1}U'D^\dag UW e_j=f_i'D^\dag f_j.\]
By a similar reasoning, 
\[ e_{j}'W^{-1}U'D^\dag U We_{i}=f_{j}'D^\dag f_i.\]
In view of $(\ref{equal})$,
\[f_i'D^\dag f_j=f_{j}'D^\dag f_i. \]
Thus, $PD^\dag P$ is symmetric.

Consider the bordered matrix
\[\S:=\left[
\begin{array}{cccccc}
W^{-1}U'D^{\dag}UW & e_1 \\
e_{1}' & 0
\end{array}
\right].
\]
Because $\1 \in \range(D)$,  $e_1 \in \range(W^{-1} U'D^{\dag}UW)$. Since $W^{-1}U'D UW$  is symmetric,
 $(W^{-1}U'DUW)^\dag$ is symmetric as well. Because
$$(W^{-1}U'DU W)^\dag=W^{-1}U'D^\dag UW,$$ $\S$ is symmetric.
By inertia formula in (N\ref{(N61)}), we have 
\begin{equation} \label{inertiafo}
\inertia(-e_1'W^{-1}U'DU W e_1)+\inertia(W^{-1}U'D^\dag U W)=\inertia(\S). 
\end{equation}
By an easy computation, we see that $$e_1'W^{-1}U'DUWe_1=\frac{1}{n}\1'D \1.$$ So,
\[\inertia(-e_1'W^{-1}U'DUWe_1)=(1,0,0). \]
Because $W^{-1}U'D^\dag U W$ has exactly one positive eigenvalue, it follows from $(\ref{inertiafo})$ that
$\S$ has exactly one positive eigenvalue. By (N\ref{(N8)}),  
\[x \in e_1^\perp \implies x'W^{-1}U'D^{\dag}UW x \leq 0.\]
Specializing $$x=e_i~~~i=2,\dotsc,n$$ in the above inequality leads to $$f_{i}'D^{\dag}f_i \leq 0~~i=2,\dotsc,n.$$ This proves 
$-PD^\dag P$ is positive semidefinite. The proof is complete.
\end{proof}

\begin{corollary}
$D^\dag$ is negative semidefinite on $\1^\perp$.
\end{corollary}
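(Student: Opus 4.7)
The plan is to deduce this corollary directly from Theorem \ref{pd+p}, which has already established that $-PD^\dag P$ is positive semidefinite (and, in particular, that $PD^\dag P$ is symmetric). Since $P = I - \frac{1}{n}J$ is the orthogonal projection onto $\1^\perp$, any vector $x \in \1^\perp$ satisfies $Px = x$. This is the only nontrivial observation needed.

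Concretely, I would first recall that for a (not necessarily symmetric) matrix $M$, the phrase ``$M$ is negative semidefinite on $\1^\perp$'' means $x' M x \leq 0$ for every $x \in \1^\perp$; the value $x'Mx$ depends only on the symmetric part of $M$, so there is no ambiguity. Then for arbitrary $x \in \1^\perp$ I would write
\[
x' D^\dag x \;=\; (Px)' D^\dag (Px) \;=\; x' \bigl(P D^\dag P\bigr) x,
\]
using $Px = x$ in the first equality and $P' = P$ in the second. By Theorem \ref{pd+p}, $-PD^\dag P$ is positive semidefinite, hence $x'(PD^\dag P)x \leq 0$, giving $x' D^\dag x \leq 0$ as required.

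There is essentially no obstacle: the content of the corollary is exactly the content of Theorem \ref{pd+p} once one restricts the quadratic form to $\1^\perp$ and uses $Px=x$ there. The only point worth flagging is that $D^\dag$ itself need not be symmetric, which is why the statement has to be phrased as negative semidefiniteness on a subspace rather than as a global assertion; the reduction via $P$ is precisely what neutralizes this asymmetry, since $PD^\dag P$ was shown to be symmetric inside the proof of Theorem \ref{pd+p}.
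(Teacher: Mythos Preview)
Your proof is correct and is essentially identical to the paper's own argument: both use $Px=x$ for $x\in\1^\perp$ to rewrite $x'D^\dag x$ as $x'(PD^\dag P)x$ and then invoke Theorem~\ref{pd+p}.
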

\begin{proof}
Let $x \in \1^\perp$. Then, $x'D^\dag x=x'P D^\dag P x$; hence by Theorem \ref{pd+p}, $x'D^\dag x \leq 0$.
\end{proof}

\begin{theorem} \label{eab}
Let $E=\L J+ J \L -2 L$. Then,
\[\1' D^{\dag} \1>0 ~~ \mbox{if and only if}~~\1' E^{\dag} \1>0. \]
\end{theorem}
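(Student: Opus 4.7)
The plan is to reduce both sides of the equivalence to a solvability statement via Corollary~\ref{1'D1=0}, then exhibit an explicit scaling bijection between the two solution sets. Note first that $E$ is itself a $\GDM$ (take $a=b=1$ in Definition~\ref{p}), so Theorem~\ref{1D1} guarantees $\1'D^\dag \1 \geq 0$ and $\1'E^\dag \1 \geq 0$. Hence the claimed equivalence is the contrapositive of
\[
\1'D^\dag \1 = 0 \iff \1'E^\dag \1 = 0.
\]
By Corollary~\ref{1'D1=0}, the left side says there is $f \in \1^\perp$ with $Df = \1$, and the right side says there is $g \in \1^\perp$ with $Eg = \1$.

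The core computation is to rewrite these two solvability conditions in terms of $L$ alone. For any $f \in \1^\perp$ we have $Jf = \1(\1'f) = 0$, so the defining formula $D = a^2 \L J + b^2 J \L - 2ab L$ gives
\[
Df = b^2 (\1'\L f)\1 - 2ab\, Lf.
\]
Since $L$ is symmetric with $L\1=0$, we have $Lf \in \1^\perp$, and therefore the equation $Df = \1$ decomposes uniquely along $\mathrm{span}\{\1\}$ and $\1^\perp$, yielding the two scalar conditions $Lf = 0$ and $b^2\, \1'\L f = 1$. The identical calculation with $a=b=1$ shows that $Eg = \1$ for $g \in \1^\perp$ is equivalent to $Lg = 0$ and $\1'\L g = 1$.

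With these characterizations in hand, the scaling $f \mapsto g := b^2 f$ furnishes an explicit bijection between the two solution sets inside $\1^\perp$: $Lg = b^2 L f = 0$ and $\1'\L g = b^2 \1'\L f = 1$, and the inverse map $g \mapsto f := g/b^2$ works in the other direction. This establishes $\1'D^\dag \1 = 0 \iff \1'E^\dag \1 = 0$, and the theorem follows. The only subtle point in the argument is the orthogonal-decomposition step used to split $Df = \1$ into the two conditions on $L$; everything else is routine once that split is in place.
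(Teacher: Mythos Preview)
Your proof is correct and shares the same starting point as the paper's argument---both reduce the statement, via Corollary~\ref{1'D1=0} and the nonnegativity from Theorem~\ref{1D1}, to the equivalence of the two solvability conditions in $\1^\perp$. The execution differs, however. The paper argues each direction by contradiction: assuming $\1'D^\dag\1>0$ and $\1'E^\dag\1=0$, it takes $f\in\1^\perp$ with $Ef=\1$, uses $f'Ef=0$ to get $Lf=0$, computes $Df=b^2(\1'\L f)\1$, and then invokes Corollary~\ref{1'D1=0} again (together with Corollary~\ref{nullequal}) to force $\1'\L f=0$ and reach a contradiction. Your route is more direct: you split $Df=\1$ (and $Eg=\1$) along $\mathrm{span}\{\1\}\oplus\1^\perp$ to obtain the explicit characterizations $\{Lf=0,\ b^2\1'\L f=1\}$ and $\{Lg=0,\ \1'\L g=1\}$, and then the scaling $g=b^2 f$ gives a bijection handling both directions at once. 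This avoids the auxiliary Corollary~\ref{nullequal} and makes the symmetry between $D$ and $E$ transparent; the paper's version, on the other hand, stays closer to the ambient matrix identities without isolating the two scalar conditions.
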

\begin{proof}
Suppose $\1' D^\dag \1>0$. If possible, let $\1' E^\dag \1=0$. We shall now get a contradiction.
By Corollary \ref{1'D1=0},  there exists $f \in \1^\perp$ such that
\begin{equation} \label{ealpha}
E f=\1.
\end{equation}
As
$f' Ef=0$,  we have $f'Lf=0$; so $L f=0$. 
We now have
\begin{equation} \label{DDE}
\begin{aligned}
Df &= a^2\L J f + b^2 J \L f-2ab Lf  \\
&= b^2\1 \1' \L f=b^2(\1' \L f) \1.
\end{aligned}
\end{equation}
Since $\1' D^\dag \1>0$, from (\ref{DDE}) and Corollary \ref{1'D1=0}, we find that  $\1' \L f = 0$. Thus, $f \in \nul(D)$. 
By Corollary \ref{nullequal}, $f \in \nul(E)$. This contradicts $(\ref{ealpha})$. So, $\1' E^\dag \1>0$. 

By using a similar argument, we get the reverse implication. 
\end{proof}

\subsection{Generalized Circum Euclidean distance matrix}
Suppose $p^1,\dotsc,p^n$ are some vectors in a Euclidean space $(V,\langle \cdot, \cdot \rangle)$. If there exists a vector $v \in V$ and $r>0$ such that
$$\langle p^i-v, p^i-v\rangle =\| p^i-v\|^2=r~~\mbox{for all}~i=1,\dotsc,n,$$ then we say that $p^1,\dotsc,p^n$ lie on the surface of a hypersphere. Now the Euclidean distance matrix $[\|p^i-p^j\|^2]$ is called a circum $\EDM$.
A well-known result (\cite[Theorem $3.4$]{{Tarazaga}}) characterizes all circum $\EDM$s. 
This says that $E$ is a circum $\EDM$ if and if there exists a vector $s$ and a scalar $\beta$ such that
\[Es=\beta \1~~\mbox{and}~~s'\1=1. \]
This is equivalent to saying that $E$ is a circum $\EDM$ if and only if $\1'E^\dag \1>0$. 
We now introduce the following definition.
\begin{definition} \rm
We say that $D=[d_{ij}]$ is a circum $\GDM$ if there exist $a,b>0$ and vectors $x^1,\dotsc,x^n$ on the surface of a hypersphere 
such that
\[d_{ij}=\|ax^i-bx^j\|^2, \] where $\sum_{i=1}^{n} x^i=0$.
\end{definition}

We now have the following result for $\GDM$s. 

\begin{theorem}
The following are equivalent.
\begin{enumerate}
\item[\rm (1)] 
$D$ is a circum $\GDM$.

\item[\rm (2)]  $\1' D^{\dag} \1>0$. 
\end{enumerate}
\end{theorem}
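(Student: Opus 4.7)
The plan is to reduce the equivalence to the known circum characterization for ordinary EDMs via Theorem~\ref{eab}. Write $D=a^2\L J+b^2 J\L-2abL$ as in (\ref{D}) and, using Proposition~\ref{prop}, realize $L$ as the Gram matrix of vectors $x^1,\dotsc,x^n$ in some Euclidean space satisfying $\sum_{i=1}^n x^i=0$. Set $E:=\L J+J\L-2L$; a direct computation gives $E_{ij}=l_{ii}+l_{jj}-2l_{ij}=\|x^i-x^j\|^2$, so $E$ is the ordinary Euclidean distance matrix associated to these same points. By the definition of a circum $\GDM$, $D$ is circum exactly when these $x^i$ lie on a common hypersphere, which is the classical condition that $E$ be a circum EDM.

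For (1)$\Rightarrow$(2): if $D$ is a circum $\GDM$, then the underlying points $x^1,\dotsc,x^n$ lie on a hypersphere, so $E$ is a circum EDM, and the Tarazaga characterization recalled immediately before the theorem gives $\1'E^\dag\1>0$; Theorem~\ref{eab} then yields $\1'D^\dag\1>0$.

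For (2)$\Rightarrow$(1): conversely, if $\1'D^\dag\1>0$, Theorem~\ref{eab} produces $\1'E^\dag\1>0$, so the same Tarazaga result forces $E$ to be a circum EDM; hence the vectors $x^i$ realizing $L$ lie on a hypersphere and $D=[\|ax^i-bx^j\|^2]$ is a circum $\GDM$.

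The only point requiring care is the compatibility of the realization: Proposition~\ref{prop} supplies vectors with vanishing centroid (forced by $L\1=0$), so the circum property for the asymmetric matrix $D$ really does coincide with the circum property of the symmetric EDM $E$. No substantive obstacle is expected, since the heavy lifting has already been done in Theorem~\ref{eab}; the present statement functions as a transfer principle from the classical symmetric setting to the generalized one.
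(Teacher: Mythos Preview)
Your proposal is correct and follows essentially the same route as the paper: realize $D$ via centered vectors $x^i$ with Gram matrix $L$, form the ordinary EDM $E=\L J+J\L-2L=[\|x^i-x^j\|^2]$, invoke the Tarazaga characterization $\1'E^\dag\1>0\Leftrightarrow E$ circum, and transfer to $D$ via Theorem~\ref{eab}. Your added remark on the compatibility of realizations (the centroid condition forcing the same underlying point configuration for $D$ and $E$) even makes explicit a point the paper leaves implicit.
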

\begin{proof}
As $D=[d_{ij}]$ is a $\GDM$, 
there exist vectors $x^1,\dotsc,x^n$ such that
\[\sum_{j=1}^{n} x^i=0 ~~\mbox{and}~~d_{ij}=\|ax^i-bx^j\|^2.\] 
Define
\[ e_{ij}:=[\|x^i-x^j\|^{2}]~~\mbox{and}~~E:=[e_{ij}].\]
Assume (1).  
Then, $x^1,\dotsc,x^n$ lie on the surface of a hypersphere. So, $E$ is a circum $\EDM$ and hence $\1' E^\dag \1>0$.
By Theorem $\ref{eab}$, $\1' D^\dag \1>0$. 
This proves (2).

Assume (2). 
Then by Theorem $\ref{eab}$, $\1'E^\dag \1>0$. 
So, $x^1,\dotsc,x^n$ lie on the surface of a hypersphere. Hence $D$ is a circum $\GDM$.
\end{proof}

\begin{corollary} \label{nulld}
If $D$ is a circum $\GDM$, then $\rank(D)=\rank(L)+1$; otherwise $\rank(D)=\rank(L)+2$.
\end{corollary}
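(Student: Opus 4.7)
My plan is to reduce the rank question to a nullspace computation and carry out that computation by translating $Dx=0$ into conditions on $L$ alone. Theorem \ref{nuld} already confines $\nul(D)$ to $\1^\perp$, so for any $x\in\1^\perp$ the identity $D = a^2\L J + b^2 J\L - 2ab L$ combined with $Jx=0$ collapses to
$$Dx \;=\; b^2(\1'\L x)\,\1 \;-\; 2ab\,Lx.$$
Since $L$ is a generalized Laplacian we have $\range(L)\subseteq\1^\perp$, so the scalar multiple of $\1$ on the right cannot be cancelled by $Lx$; the equation $Dx=0$ therefore decouples into the two independent conditions $Lx=0$ and $\1'\L x=0$. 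This yields the working description
$$\nul(D) \;=\; \bigl\{\,x\in \nul(L)\cap\1^\perp \;:\; \1'\L x = 0\,\bigr\},$$
and since $\1\in\nul(L)$ but $\1\notin\1^\perp$, the ambient space $\nul(L)\cap\1^\perp$ has dimension $n-\rank(L)-1$.

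In the circum case, where $\1'D^\dag\1>0$, I would invoke Corollary \ref{cor5} to conclude $\nul(D) = \nul(L)\cap\1^\perp$ outright; the dimension count above then gives $\rank(D) = \rank(L) + 1$.

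In the non-circum case, where $\1'D^\dag\1 = 0$, I would use Corollary \ref{1'D1=0} to produce $f\in\1^\perp$ with $Df=\1$. Pairing against $f$ and using $\1'f=0$ collapses $f'Df = \1'f = 0$ to $-2ab\,f'Lf = 0$; since $L$ is positive semidefinite this forces $Lf=0$. Substituting $Lf=0$ back into the displayed formula for $Df$ gives $b^2(\1'\L f)\,\1 = \1$, so $\1'\L f = 1/b^2 \neq 0$. Thus $f$ witnesses the non-vanishing of the linear functional $x\mapsto\1'\L x$ on $\nul(L)\cap\1^\perp$, so $\nul(D)$ is a hyperplane in that subspace, one dimension smaller. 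Hence $\dim\nul(D) = n-\rank(L)-2$ and $\rank(D) = \rank(L) + 2$.

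The main obstacle is the first paragraph's decoupling argument, where the asymmetry of $D$ could easily be mishandled; once one observes that $\range(L)\subseteq\1^\perp$ cleanly separates the two terms of $Dx$, the rest is a straightforward dimension count driven by Corollaries \ref{cor5} and \ref{1'D1=0}. A minor bookkeeping point worth noting is that in the non-circum case one automatically has $\dim\nul(L)\ge 2$, since otherwise $\nul(L)\cap\1^\perp=\{0\}$ would leave no room for the vector $f$ produced above, forcing $\1'D^\dag\1>0$ by contraposition.
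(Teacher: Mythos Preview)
Your argument is correct and, in fact, more self-contained than the paper's. The paper proves this corollary by reducing to the symmetric EDM $E=\L J+J\L-2L$: it uses Corollary~\ref{nullequal} to get $\rank(D)=\rank(E)$, then Theorem~\ref{eab} to match the circum conditions for $D$ and $E$, and finally appeals to an external result (Proposition~1 in \cite{Kurata}) for the rank formula in the EDM case. You instead compute $\nul(D)$ directly, exploiting the orthogonal splitting $Dx = b^2(\1'\L x)\1 - 2ab\,Lx$ for $x\in\1^\perp$ to identify $\nul(D)$ as the kernel of the functional $x\mapsto \1'\L x$ on $\nul(L)\cap\1^\perp$, and then use Corollaries~\ref{1'D1=0} and~\ref{cor5} to decide whether that functional is trivial. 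Your route avoids the external citation and makes the mechanism behind the rank jump transparent; the paper's route is shorter given the infrastructure already built (namely $\nul(D)=\nul(E)$ and the equivalence of the circum conditions), but hides the actual computation inside the cited reference.
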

\begin{proof}
Let $E=\L J + J \L -2 L$. By Corollary \ref{eab},
$\rank(D)=\rank(E)$. By the previous result $D$ is a circum $\GDM$ if and only if $\1' D^\dag \1>0$. By Theorem \ref{eab},
$\1' E^\dag \1>0$  if and only if $\1' D^\dag \1 >0$. In view of
Proposition $1$ in \cite{Kurata}, we get
\[\rank(E) = \begin{cases} \rank(L)+1 & \mbox{if}~ \1' E^\dag \1 >0 \\ 
\rank(L)+2 & \mbox{else}. \end{cases}\]
The proof now follows easily.
\end{proof}

We now obtain a formula to compute the Moore-Penrose inverse of a circum $\GDM$.

\begin{theorem}\label{mpinverse}
If $D$ is a circum $\GDM$, then
\[D^{\dag}=-\frac{1}{2ab} L^{\dag} + \frac{1}{\1' D^{\dag} \1} (D^{\dag} \1)(\1'D^{\dag}).\]
\end{theorem}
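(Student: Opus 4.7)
The plan is to verify the four Moore-Penrose conditions for
\[M := -\frac{1}{2ab} L^\dag + \frac{1}{\mu}(D^\dag \1)(\1' D^\dag),\]
where $\mu := \1'D^\dag \1$ is strictly positive by the circum hypothesis, and with $\ell := \diag(L)$ so that $D = a^2\ell\1' + b^2\1\ell' - 2ab L$. The key technical input is the pair of structural identities
\[D^\dag \1 = \frac{\mu}{n}\1 + \frac{a\mu}{2b} L^\dag \ell, \qquad (D^\dag)' \1 = \frac{\mu}{n}\1 + \frac{b\mu}{2a} L^\dag \ell.\]
The second identity follows from the first applied to $D'$, which is itself a circum $\GDM$ with the roles of $a$ and $b$ interchanged.

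To prove the first identity, I first show $\ell \perp \nul(L)\cap\1^\perp$. In the circum case, Corollary \ref{cor5} gives $\nul(D) = \nul(L)\cap\1^\perp$; for any $v$ in this subspace, $Dv = b^2(\ell'v)\1 = 0$ forces $\ell'v = 0$. Combined with $\1 \in \nul(L)$, this yields the orthogonal decomposition $\ell = \tfrac{\tr(L)}{n}\1 + LL^\dag\ell$. Next, since $\nul(D') = \nul(D)$ in the circum case, $\range(D') = (\nul(L) \cap \1^\perp)^\perp = \range(L) \oplus \span\{\1\}$ (orthogonally), so I can write $D^\dag\1 = c\1 + r$ with $r \in \range(L)$. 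Substituting into $D(c\1 + r) = \1$ and matching the $\range(L)$-component via the decomposition of $\ell$ gives $Lr = \tfrac{acn}{2b} LL^\dag\ell$; applying $L^\dag$ and using $r\in\range(L)$ yields $r = \tfrac{acn}{2b} L^\dag\ell$, while the normalization $\1'D^\dag\1 = \mu$ pins down $c = \mu/n$.

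With the two identities in hand, the verification collapses to two short computations. Using $L^\dag\1 = 0$, I get $DL^\dag = b^2\1(L^\dag\ell)' - 2ab LL^\dag$; rewriting $\tfrac{1}{\mu}\1\1'D^\dag = \tfrac{J}{n} + \tfrac{b}{2a}\1(L^\dag\ell)'$ via the second identity, the $\1(L^\dag\ell)'$ terms cancel and I obtain $DM = LL^\dag + \tfrac{J}{n}$. A symmetric argument with $L^\dag D$ and the first identity gives $MD = LL^\dag + \tfrac{J}{n}$ as well. Since $LL^\dag + \tfrac{J}{n}$ is the orthogonal projection onto $\range(L)\oplus\span\{\1\} = \range(D)$, it is symmetric (yielding the two symmetry conditions) and fixes $\range(D)$ (giving $DMD = D$). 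For $MDM = M$, it suffices that $M$ annihilates $\nul(D')= \nul(L)\cap\1^\perp$: on any such $v$, $L^\dag v = 0$ and $\1'D^\dag v = ((D^\dag)'\1)'v = 0$ by the second identity. The main obstacle is the structural fact $\ell \perp \nul(L)\cap\1^\perp$ in the circum case and the resulting closed-form expression for $D^\dag\1$; once that is in hand, the remaining verifications are bookkeeping.
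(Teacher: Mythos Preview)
Your argument is correct, but it proceeds quite differently from the paper's. The paper never computes $D^\dag\1$ explicitly; instead it sets $S:=D^\dag-\frac{1}{\mu}(D^\dag\1)(\1'D^\dag)$ and checks directly that $S$ is the Moore--Penrose inverse of $K:=PDP$, using only the generic facts $S\1=0$, $\1'S=0$ (hence $SP=PS=S$) together with $DD^\dag\1=\1$ and $\1'D^\dag D=\1'$. Since $PL=LP=L$ gives $PDP=-2abL$, one reads off $S=-\frac{1}{2ab}L^\dag$ and the formula follows. Your route, by contrast, pins down the closed forms $D^\dag\1=\frac{\mu}{n}\1+\frac{a\mu}{2b}L^\dag\ell$ and $(D^\dag)'\1=\frac{\mu}{n}\1+\frac{b\mu}{2a}L^\dag\ell$ (which rest on the circum-specific fact $\ell\perp\nul(L)\cap\1^\perp$), and then verifies $DM=MD=LL^\dag+\frac{J}{n}$ by cancellation. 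The paper's argument is shorter and uses less structure; yours is more computational but yields additional information as a by-product, namely an explicit description of $D^\dag\1$ and, from the scalar equation you did not need, the identity $\mu\bigl[(a^2+b^2)\tr(L)+\frac{nab}{2}\,\ell'L^\dag\ell\bigr]=n$.
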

\begin{proof}
Let $$\a:=\1'D^{\dag} \1,~S:=D^{\dag}- \frac{\displaystyle 1}{\displaystyle \a}(D^{\dag} \1)(\1' D^{\dag})~~\mbox{and}~K:=PDP.$$
We now prove that
\(SKS=S \). Let $x \in \rr^n$. Then, $$x=c_{1} \1+ c_{2} f,$$ for some $c_{1},c_2 \in \rr$ and $f \in \{\1\}^{\perp}$.
Since $S\1=0$, we see that $$Sx=c_{2} Sf.$$ As $P \1=0$ and $Pf=f$, $$SPx=c_{2} Sf.$$ Therefore, $SP=S$. In a similar manner, by using
$\1' S=0$ and $\1'P=0$, we get $PS=S$.  Thus, to prove $SKS=S$, it suffices to show that $SDS=S$.  As $\1'S=0$,  we note that
\begin{equation}
\begin{aligned}
SDS &= (D^{\dag} - \frac{1}{\a} (D^{\dag} \1)(\1' D^{\dag}) )D S \\
&=D^{\dag}DS\\
&=D^{\dag}D(D^{\dag} - \frac{1}{\a} (D^{\dag} \1)(\1' D^{\dag}) )\\
&=S.
\end{aligned}
\end{equation}
Hence, $SKS=S$. We claim that 
\(KSK=K.\) Again by using 
$PS=SP=S$ and $P \1=0$, we see that
\begin{equation}
\begin{aligned}
KSK &=(PDP)S(PDP) \\
&= (PDS)(PDP) \\
&=PDSDP \\
&=PD(D^{\dag}-\frac{1}{\a} D^{\dag}\1 \1' D^{\dag})DP \\
&=PDP \\
&=K.
\end{aligned}
\end{equation}
Since $SK=KS=P$, we conclude that $K$ is the Moore-Penrose inverse of $S$. Thus, 
$$(PDP)^{\dag}=S. $$ Since $L \1=0$ and $L$ is symmetric,  $PL=LP=L$ and hence  
\begin{equation}
\begin{aligned}
PDP&=-2ab PLP \\
&=-2ab L.
\end{aligned}
\end{equation}
So, \[(PDP)^{\dag}=-\frac{1}{2ab} L^{\dag}. \]
Thus,
\[S=(PDP)^{\dag}=D^{\dag}-\frac{1}{\a}(D^{\dag} \1)(\1' D^{\dag}).\] This completes the proof
of the formula
\[D^{\dag}=-\frac{1}{2ab}L^{\dag} + \frac{1}{\1' D^{\dag} \1}(D^{\dag} \1)(\1' D^{\dag}) .\]
\end{proof}

\subsection{Some majorization results}
Suppose all the eigenvalues of an $n \times n$ matrix $A$ are real. Let the eigenvalues of $A$ be $\lambda_1(A),\dotsc,\lambda_{n}(A)$,
where
\[\lambda_{1}(A) \geq \cdots \geq \lambda_{n}(A). \]
We now use $\lambda(A)$ to denote the vector $(\lambda_1(A),\dotsc,\lambda_n(A))'$.
In the following, we obtain a Schur-type majorization result for $\GDM$s.

\begin{theorem}
 $\diag(D)$ is majorized by $\lambda(D)$. 
\end{theorem}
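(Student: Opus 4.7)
The plan is to establish $\diag(D)\prec\lambda(D)$ by chaining three majorizations, applying Schur's inequality (N\ref{(N6)}) twice---once to the generalized Laplacian $L$, and once to the symmetric matrix $M:=W^{-1}U'DUW$ built in Theorem \ref{ww}---with the intermediate vector $(a-b)^{2}\lambda(L)$ serving as a bridge.

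First, a direct computation from Definition \ref{p} gives $d_{ii}=(a-b)^{2}l_{ii}$, so $\diag(D)=(a-b)^{2}\diag(L)$. Schur's inequality applied to the symmetric matrix $L$ yields $\diag(L)\prec\lambda(L)$, and since $(a-b)^{2}\geq 0$ and positive scaling preserves $\prec$, this upgrades to $\diag(D)\prec(a-b)^{2}\lambda(L)$. Next, Theorem \ref{ww} tells us that $M$ is symmetric and similar to $D$, so $\lambda(M)=\lambda(D)$, while equation (\ref{m4}) together with $s_{1}=\tr(L)$ shows
\[
\diag(M)=\bigl((a^{2}+b^{2})\tr(L),\,-2ab\,\a_{2},\dotsc,-2ab\,\a_{n}\bigr)'.
\]
Applying Schur's inequality to $M$ therefore gives $\diag(M)\prec\lambda(M)=\lambda(D)$.

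The substantive step is the bridge $(a-b)^{2}\lambda(L)\prec\diag(M)$. Both vectors have common sum $(a-b)^{2}\tr(L)$ (using $\sum_{j\geq 2}\a_{j}=\tr(L)$ since $\a_{1}=0$). Sorting both decreasingly, the $k$-th partial-sum inequality to verify is
\[
(a-b)^{2}\sum_{j=n-k+1}^{n}\a_{j}\;\leq\;(a^{2}+b^{2})\tr(L)-2ab\sum_{j=2}^{k}\a_{j}.
\]
Expanding $(a-b)^{2}=(a^{2}+b^{2})-2ab$ on the left and using the elementary rearrangement
\[
\sum_{j=n-k+1}^{n}\a_{j}-\sum_{j=2}^{k}\a_{j}=\sum_{j=k+1}^{n}\a_{j}-\sum_{j=2}^{n-k}\a_{j},
\]
this reduces to the manifestly true
\[
(a-b)^{2}\sum_{j=2}^{n-k}\a_{j}+2ab\sum_{j=k+1}^{n}\a_{j}\;\geq\;0,
\]
since $\a_{j}\geq 0$ and $a,b>0$.

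Chaining $\diag(D)\prec(a-b)^{2}\lambda(L)\prec\diag(M)\prec\lambda(D)$ and invoking transitivity of $\prec$ completes the argument. The main obstacle is spotting the rearrangement identity in the bridge step that collapses the partial-sum inequality to a transparently nonnegative expression; once this is in place, the remainder is a direct appeal to Schur's theorem and to the structural facts for $M$ already recorded in the proof of Theorem \ref{ww}.
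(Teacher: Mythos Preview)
Your argument is correct. The overall architecture is the same as the paper's: compute $\diag(D)=(a-b)^{2}\diag(L)$, identify the symmetric matrix $M=W^{-1}U'DUW$ from Theorem~\ref{ww} with $\diag(M)=\bigl((a^{2}+b^{2})\tr(L),-2ab\a_{2},\dotsc,-2ab\a_{n}\bigr)'$, apply Schur's theorem to $M$ for the final step, and bridge $\diag(D)$ to $\diag(M)$ using Schur's theorem on $L$. The difference is in how that bridge is organized. The paper argues $\diag(D)\prec\diag(M)$ in one move, combining the Schur partial sums $-2ab\sum l_{ii}\leq -2ab\sum\a_{i}$ with the trace bound $(a^{2}+b^{2})\sum l_{ii}\leq(a^{2}+b^{2})\tr(L)$. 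You instead factor the bridge through the intermediate vector $(a-b)^{2}\lambda(L)$: the first half $\diag(D)\prec(a-b)^{2}\lambda(L)$ is Schur on $L$ outright, and the second half $(a-b)^{2}\lambda(L)\prec\diag(M)$ becomes a calculation purely in the eigenvalues $\a_{j}$, which your rearrangement identity reduces cleanly to a nonnegative expression. Your route is one majorization longer but has the advantage that the nontrivial inequality involves only the $\a_{j}$, so the ordering bookkeeping is more transparent. One cosmetic point: the scaling factor is $(a-b)^{2}\geq 0$, not strictly positive, so say ``nonnegative scaling preserves~$\prec$'' (the case $a=b$ is trivial anyway).
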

\begin{proof}
In view of (\ref{pdp}), 
\begin{equation}
U'DU=\left[\begin{array}{cccc}
(a^2+b^2) \tr(L) & b^2 s \\
a^2 s & -2ab \mbox{Diag}(\a_2,\dotsc,\a_n)
\end{array}
\right].
\end{equation}
Define \[x:=((a^2+b^2) \tr(L),-2ab \a_2,\dotsc,-2ab \a_n)'.\] Without loss of generality, we can assume that
\[ -2ab \a_2 \geq \cdots \geq -2ab \a_n~~\mbox{and}~~-l_{11} \geq \cdots \geq -l_{nn}. \]
We now prove the following. \\
{\bf Claim:} 
 $(a-b)^2 \diag(L) \prec x.$ \\
Set $\a_1=0$. 
By the majorization result of Schur,
\[-2ab \diag(L) \prec -2ab \lambda(L). \]
So, for each $1 \leq k \leq n-1$,
\begin{equation} \label{lii}
-2ab \sum_{i=1}^{k} l_{ii} \leq -2ab \sum_{i=1}^{k} \alpha_k.
\end{equation}
As $L$ is positive semidefinite,  for each $1 \leq k \leq n-1$,
\begin{equation} \label{ljj}
(a^2+b^2) \sum_{i=1}^{k} l_{ii} \leq (a^2+b^2) \tr(L).
\end{equation}
Using $(\ref{lii})$ and (\ref{ljj}), for each $1 \leq k \leq n-1$, we find that
\[(a-b)^2 \sum_{i=1}^{k} l_{ii} \leq (a^2+b^2) \tr(L)-2ab \sum_{i=1}^{k} \a_i. \]
Furthermore,
\[(a-b)^2 \sum_{i=1}^{n} l_{ii} = (a^2+b^2) \tr(L)-2ab \sum_{i=1}^{n} \a_i.\]
Hence, $$(a-b)^2 \diag(L) \prec x.$$ This proves the claim. 

By an easy verification, $$\diag(D)=(a-b)^2 \diag(L).$$ Therefore, 
\begin{equation} \label{maj1}
\diag(D) \prec x. 
\end{equation}
We now recall equation (\ref{m4}):
\begin{equation} 
W^{-1}U'DUW=
\begin{array}{ccccc}
\left[
\begin{array}{ccccc}
(a^{2} + b^2)s_{1} & ab s_{2} & \hdots & ab s_{n} \\
abs_{2}  & -2ab \a_{2} & \hdots & 0\\
\vdots & \vdots & \ddots & \vdots \\
ab s_{n} & 0 & \hdots & -2ab \a_{n}
\end{array}
\right].
\end{array}
\end{equation}
Since $W^{-1}U'DUW$ is symmetric and $\diag(W^{-1}U'DUW)=x$, by Schur majorization result,
\begin{equation} \label{maj2}
x \prec \lambda(W^{-1}U'DUW)=\lambda(D). 
\end{equation}
By (\ref{maj1}) and $(\ref{maj2})$, 
 $$\diag(D) \prec \lambda(D).$$ The proof is complete.
\end{proof}

We now prove another result.
\begin{theorem}
$\lambda(D) \prec \lambda(\frac{D+D'}{2})$.
\end{theorem}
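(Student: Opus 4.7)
The plan is to combine the real Schur triangularization of $D$ (which is available because Theorem \ref{ww} guarantees that every eigenvalue of $D$ is real) with the classical Schur diagonal-eigenvalue majorization inequality recorded in (N\ref{(N6)}).

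First I would produce an orthogonal matrix $Q$ such that $Q'DQ = T$ is upper triangular with $\lambda_1(D),\dotsc,\lambda_n(D)$ along its diagonal. Because every eigenvalue of $D$ is real, this is just the real Schur form specialized to a matrix with real spectrum; alternatively, one can build $Q$ by induction — pick a real unit eigenvector $q_1$ of $D$ for $\lambda_1(D)$, complete it to an orthonormal basis, observe that $D$ becomes block upper triangular with leading scalar block $\lambda_1(D)$ and trailing $(n-1)\times(n-1)$ block whose eigenvalues are the remaining $\lambda_i(D)$ (still real), and iterate.

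Next I would conjugate $\frac{D+D'}{2}$ by the same $Q$. Since orthogonal conjugation commutes with transposition,
\[
Q'\,\tfrac{D+D'}{2}\,Q \;=\; \tfrac{T+T'}{2},
\]
which is symmetric and has diagonal $\diag(T)=(\lambda_1(D),\dotsc,\lambda_n(D))'$. Applying Schur's theorem from (N\ref{(N6)}) to $\frac{T+T'}{2}$ gives
\[
\lambda(D) \;=\; \diag\!\left(\tfrac{T+T'}{2}\right) \;\prec\; \lambda\!\left(\tfrac{T+T'}{2}\right),
\]
and orthogonal similarity preserves the spectrum, so $\lambda(\frac{T+T'}{2})=\lambda(\frac{D+D'}{2})$. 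Chaining these identities yields the claimed majorization.

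The only nontrivial step is the orthogonal triangularization, and it rests squarely on the reality of the spectrum of $D$ established in Theorem \ref{ww}; the rest is bookkeeping. (A variational alternative, which I would fall back on if the triangularization feels heavy, is to use Ky Fan's principle $\sum_{i=1}^{k}\lambda_i(\frac{D+D'}{2})=\max_{V'V=I_k}\tr(V'DV)$ and plug in $V=[q_1,\dotsc,q_k]$ from the Schur basis; this recovers the partial-sum inequalities directly, and the equality of traces $\tr(D)=\tr(\frac{D+D'}{2})$ upgrades weak majorization to majorization.)
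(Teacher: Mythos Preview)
Your proof is correct and in fact establishes the general principle that $\lambda(A)\prec\lambda\!\left(\tfrac{A+A'}{2}\right)$ for \emph{any} real square matrix $A$ with purely real spectrum. The paper takes a different, structure-specific route: working in the coordinates of Theorem~\ref{ww}, it writes $U'(D+D')U=2F+\alpha Q$, where $F=W^{-1}U'DUW$ is the symmetric matrix similar to $D$, $\alpha=(a-b)^{2}\ge 0$, and $Q=\left[\begin{smallmatrix}0&s\\s&0\end{smallmatrix}\right]$ has spectrum $(\|s\|,0,\dotsc,0,-\|s\|)$; Ky Fan's inequality from (N\ref{(N6)}) together with the symmetry of $\lambda(Q)$ is then used to conclude $\lambda(2F)\prec\lambda(D+D')$. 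Your argument is shorter, avoids Ky Fan entirely, and needs only the real Schur triangularization (available precisely because Theorem~\ref{ww} forces the spectrum of $D$ to be real) plus Schur's diagonal majorization. The paper's approach, while heavier, stays inside the explicit $\GDM$ coordinates already developed and makes visible that the discrepancy between $D$ and its symmetric part is governed by $(a-b)^{2}$.
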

\begin{proof}
By (\ref{pdp}),
\[
U'DU=\left[\begin{array}{cccc}
(a^2+b^2) \tr(L) & b^2 s \\
a^2 s & -2ab \mbox{Diag}(\a_2,\dotsc,\a_n)
\end{array}
\right].
\] 
Hence,  \[
U'(D+D')U= \left[
\begin{array}{ccccc}
2(a^2+b^2) \tr(L) & (a^2+b^2)s \\
 (a^2+b^2)s & -4ab \mbox{Diag}(\a_2,\dotsc,\a_n) \\ 
\end{array}
\right].
\]
By $(\ref{m4})$, 
\[
W^{-1}U'DUW=
\begin{array}{ccccc}
\left[
\begin{array}{ccccc}
(a^{2} + b^2)\tr(L) & ab s \\
abs  & -2ab \Diag(\a_{2},\dotsc,\a_n) \\
\end{array}
\right].
\end{array}
\] 
Put $$F:=W^{-1}U'DUW.$$ 
Since $$ab \leq \frac{a^2+b^2}{2},$$
 we can write
\[U'(D+D')U=2F+ \left[
\begin{array}{cccc}
0 & \alpha s \\
\alpha s & 0 \\
\end{array}
\right], \]
for some $\alpha>0$.
Put $$Q:=\left[
\begin{array}{cccc}
0 &  s \\
 s & 0 \\
\end{array}
\right].$$ We now have 
\[U'(D+D')U-\alpha Q=2F. \]
By Ky Fan majorization theorem,
\[\lambda(U'(D+D')U-\alpha Q) \prec \lambda(D+D')-\alpha \lambda(Q).\]
As,
\[\lambda(Q)=(\lambda_1(Q),0,\dotsc,0,-\lambda_1(Q)) ~~\mbox{and}~~\lambda_1(Q)>0,\]  we have 
\[\lambda(D+D')-\a \lambda(Q) \prec \lambda(D+D'). \]
Thus,
\[\lambda(U'(D+D')U-\alpha Q) \prec \lambda(D+D'), \]
i.e. 
\[\lambda(2F) \prec \lambda(D+D'). \]
Since  $$\lambda(2F)=\lambda(2D),$$ we get $$\lambda(D) \prec \lambda(\frac{D+D'}{2}).$$
This completes the proof.
\end{proof}

As an immediate corollary of the above result, we have the following.

\begin{corollary}
$\rho(D) \leq \rho(\frac{D+D'}{2})$.
\end{corollary}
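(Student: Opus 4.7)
The plan is to deduce the corollary directly from the majorization $\lambda(D) \prec \lambda(\frac{D+D'}{2})$ established in the preceding theorem, combined with nonnegativity information about $D$ and $\frac{D+D'}{2}$.

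First I would recall that by definition of majorization (applied with $k=1$), the top entries satisfy
\[
\lambda_{1}(D) \leq \lambda_{1}\Bigl(\tfrac{D+D'}{2}\Bigr).
\]
So the task reduces to identifying each of these largest eigenvalues with the corresponding spectral radius.

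For the left side, $D$ is a nonnegative matrix (every $\GDM$ has nonnegative entries), so by Perron--Frobenius, $\rho(D)$ is an eigenvalue of $D$. By Theorem~\ref{ww}, $D$ has exactly one positive eigenvalue, hence $\rho(D)$ must coincide with that unique positive eigenvalue, which is $\lambda_1(D)$.

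For the right side, $\frac{D+D'}{2}$ is symmetric (so all eigenvalues are real) and nonnegative (being an average of two nonnegative matrices). Again by Perron--Frobenius applied to this symmetric nonnegative matrix, $\rho\bigl(\frac{D+D'}{2}\bigr)$ is an eigenvalue and, since all eigenvalues are real, must equal $\lambda_1\bigl(\frac{D+D'}{2}\bigr)$. Combining the three observations gives
\[
\rho(D) = \lambda_1(D) \leq \lambda_1\Bigl(\tfrac{D+D'}{2}\Bigr) = \rho\Bigl(\tfrac{D+D'}{2}\Bigr).
\]
There is no serious obstacle here; the only subtlety is to justify that the largest eigenvalue of each matrix is in fact the spectral radius, which is where the nonnegativity of $D$ and the symmetry of $\frac{D+D'}{2}$ (together with Theorem~\ref{ww}) come in.
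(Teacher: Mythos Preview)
Your proof is correct and matches the paper's intent: the paper states the corollary as an immediate consequence of the majorization $\lambda(D)\prec\lambda\bigl(\tfrac{D+D'}{2}\bigr)$ without giving any details, and your argument simply spells out why ``immediate'' is justified (via Perron--Frobenius and Theorem~\ref{ww} to identify $\lambda_1$ with $\rho$ on each side).
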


\section{Application}
We end this paper with an application.
\subsection{Constructing infinitely divisible matrices} 
Generalized distance matrices can be used to construct infinitely divisible matrices. Recall that a symmetric matrix $E=[e_{ij}]$ is infinitely divisible if $E$ is an entry-wise nonnegative and $[e_{ij}^{r}]$ is a positive semidefinite matrix for all $r\geq 0$. 
\begin{theorem}
Let $S=[s_{ij}]$ be an $n \times n$ generalized Laplacian matrix. If $a,b>0$, define
\[d_{ij}:=a^2 s_{ii} + b^2 s_{jj} -2ab s_{ij},~~f_{ij}:=\max{(d_{ij},d_{ji})}; \]
\[D:=[d_{ij}] ~~\mbox{and}~~ F:=[f_{ij}].\] If $\rank(S)=n-1$, then each $f_{ij}>0$ and 
 $[\frac{1}{f_{ij}}]$ is an infinitely divisible matrix. 
\end{theorem}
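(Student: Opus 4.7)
The plan is to establish that the symmetric matrix $F$ is elliptic (symmetric with exactly one positive eigenvalue) and has strictly positive entries, and then invoke Bhatia's theorem recalled in the introduction, which gives that $[1/f_{ij}^{r}]$ is positive semidefinite for every $r>0$. The identity $f_{ii}=(a-b)^{2}s_{ii}$ already shows the conclusion is non-vacuous only when $a\neq b$; the case $a=b$ collapses to the classical infinite-divisibility result for $\EDM$s with positive off-diagonal entries, so I may and do assume $a\neq b$ throughout.

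\textbf{Positivity of entries.} The assumption $\rank(S)=n-1$ together with $S\1=0$ yields $\nul(S)=\span\{\1\}$, so no $e_{i}$ lies in $\nul(S)$ and no row of the positive semidefinite matrix $S$ can vanish; hence $s_{ii}>0$ for every $i$, and $f_{ii}=(a-b)^{2}s_{ii}>0$. For $i\neq j$, Proposition \ref{prop} provides $x^{1},\dots,x^{n}$ with $\sum_{k}x^{k}=0$, $s_{ij}=\langle x^{i},x^{j}\rangle$, and $d_{ij}=\|ax^{i}-bx^{j}\|^{2}$. The Cauchy--Schwarz bound $d_{ij}\ge (a\sqrt{s_{ii}}-b\sqrt{s_{jj}})^{2}$ and its companion for $d_{ji}$ show that $d_{ij}=d_{ji}=0$ would force $a\sqrt{s_{ii}}=b\sqrt{s_{jj}}$ and $a\sqrt{s_{jj}}=b\sqrt{s_{ii}}$, hence $a=b$, a contradiction. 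Therefore $f_{ij}=\max(d_{ij},d_{ji})>0$.

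\textbf{Ellipticity.} The elementary identity $\max(u,v)=\tfrac{1}{2}(u+v)+\tfrac{1}{2}|u-v|$ combined with the observation $d_{ij}-d_{ji}=(a^{2}-b^{2})(s_{ii}-s_{jj})$ gives the clean decomposition
\[
F=\tfrac{1}{2}(D+D')+\tfrac{|a^{2}-b^{2}|}{2}\,V,\qquad V:=\bigl[\,|s_{ii}-s_{jj}|\,\bigr].
\]
The matrix $V$ is an $\EDM$, as one sees from the embedding $p^{i}:=\mathbf{1}_{[0,\,s_{ii}]}\in L^{2}(\mathbb{R})$, which gives $\|p^{i}-p^{j}\|^{2}=|s_{ii}-s_{jj}|$; Theorem \ref{fie} then yields $x'Vx\le 0$ for every $x\in\1^{\perp}$. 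Likewise,
\[
\tfrac{1}{2}(D+D')=\tfrac{a^{2}+b^{2}}{2}\bigl(\Diag(S)J+J\Diag(S)\bigr)-2ab\,S,
\]
and since $Jx=0$ for $x\in\1^{\perp}$ the first summand contributes nothing to the quadratic form on $\1^{\perp}$, while $-2abS$ is negative semidefinite there. Combining gives $x'Fx\le 0$ for every $x\in\1^{\perp}$, so by Courant--Fischer $\lambda_{2}(F)\le 0$ and $F$ has at most one positive eigenvalue. Because $\tr(F)=(a-b)^{2}\tr(S)>0$ it has at least one, so exactly one.

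Combining the two steps, $F$ is symmetric, entrywise positive, and has exactly one positive eigenvalue, i.e.\ $F$ is elliptic in the sense of the introduction; Bhatia's theorem then yields the infinite divisibility of $[1/f_{ij}]$. I expect the principal technical hurdle to be the decomposition above together with the recognition that $[\,|s_{ii}-s_{jj}|\,]$ is itself an $\EDM$; once this is in hand, the ellipticity of $F$ drops out of a single quadratic-form inequality on $\1^{\perp}$ and the rest is routine.
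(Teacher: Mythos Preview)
Your argument is correct and follows essentially the same strategy as the paper: the same decomposition $F=\tfrac{1}{2}(D+D')+\tfrac{|a^{2}-b^{2}|}{2}[\,|s_{ii}-s_{jj}|\,]$, the same reduction to showing negative semidefiniteness on $\1^{\perp}$, and the same trace argument for the single positive eigenvalue. The only noteworthy differences are cosmetic: the paper obtains $d_{ij}>0$ for all $i,j$ in one stroke by observing that $d_{ij}=(a,-b)\begin{pmatrix}s_{ii}&s_{ij}\\ s_{ij}&s_{jj}\end{pmatrix}(a,-b)'$ with every $2\times 2$ principal submatrix of $S$ positive definite (avoiding your case split on $a=b$), and it handles $[\,|s_{ii}-s_{jj}|\,]$ via the identity $|\alpha-\beta|=\alpha+\beta-2\min(\alpha,\beta)$ together with positive semidefiniteness of $[\min(s_{ii},s_{jj})]$, which is of course the Gram-matrix side of your $L^{2}$ embedding.
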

\begin{proof}
We claim that $d_{ij} >0$ for all $i,j$. Since $S \1=0$, all cofactors of $S$ are equal. Hence all principal minors of $S$ are positive. In particular,
every $2 \times 2$ principal submatrix of $S$ is positive definite. So, $d_{ij}>0$ and thus $f_{ij}>0$ for all $i,j$.

Put $G:=[f_{ij}]:=[\max(d_{ij},d_{ji})]$.
 By Theorem 4.2.9 in Bapat \cite{mp}, it suffices to show that $G$ has exactly one
(simple) positive eigenvalue.  We shall use the following identity: If $\a$ and $\b$ are positive, then
\[\max{(\alpha, \b)}=\frac{1}{2}[(\a + \b)+|\alpha -\beta |].\]
Hence,  
$$2G=[d_{ij}+d_{ji}] + [|d_{ij}-d_{ji}|]$$ has exactly one simple positive eigenvalue.
Put $$A:=[d_{ij}+d_{ji}]~~\mbox{and}~~B:=[|d_{ij}-d_{ji}|].$$ If $D=[d_{ij}]$, then for any $x \in \1^{\perp}$,  
\[x'Dx=-2ab (x'Sx) \leq 0, \]
and hence $x'Ax \leq 0$. We now claim that $B$ is negative semidefinite on $\1^{\perp}$ as well.
It can be noted easily that 
\[|d_{ij}-d_{ji}|=|a^2-b^2| |s_{ii}-s_{jj}|.  \]
If $\a,\b \geq 0$, then 
\[|\a-\b |=\a + \b -2 \min(\a,\b), \]
and therefore,
$$|d_{ij} -d_{ji}|=|a^2-b^2| (s_{ii} + s_{jj}-2\min(s_{ii},s_{jj})). $$
It is well known that $\min(s_{ii},s_{jj})$ is positive semidefinite. 
So,
$B$ is negative semidefinite on $\1^{\perp}$, and consequently, $2G=A+B$ is negative semidefinite on $\1^{\perp}$. 
Thus, $G$ has at least $n-1$ non-positive eigenvalues. Since diagonal entries of $G$ are positive, $G$ has at least 
one positive eigenvalue. Thus, $G$ has exactly one simple positive eigenvalue. This completes the proof.
\end{proof}

\section*{Funding}
The second author acknowledges the support of the Indian National Science Academy under the INSA Senior Scientist scheme.

\end{document}